\newtheorem{thm}{Theorem}[section]
\newtheorem{cor}[thm]{Corollary}
\newtheorem{prop}[thm]{Proposition}
\newtheorem{rem}[thm]{Remark}
\newcommand{\SD}{\mathcal{S}_{\partial D}}
\newcommand{\SO}{\mathcal{S}_{\partial \Omega}}
\newcommand{\KDS}{\mathcal{K}_{\partial D}^{*}}
\newcommand{\KOS}{\mathcal{K}_{\partial \Omega}^{*}}
\newcommand{\bef}{\begin{figure}}
\newcommand{\enf}{\end{figure}}
\newcommand{\Sg}{\mathcal{S}_{\GG}}
\newcommand{\Dg}{\mathcal{D}_{\GG}}
\newcommand{\Kg}{\mathcal{K}_{\GG}}
\newcommand{\ds}{\displaystyle}
\newcommand{\p}{\partial}
\newcommand{\eqnref}[1]{(\ref {#1})}
\newcommand{\Rbb}{\mathbb{R}}
\newcommand{\la}{\langle}
\newcommand{\ra}{\rangle}
\newcommand{\Acal}{\mathcal{A}}
\newcommand{\Kcal}{\mathcal{K}}
\newcommand{\Scal}{\mathcal{S}}
\newcommand{\Ga}{\alpha}
\newcommand{\Gb}{\beta}
\newcommand{\Ge}{\epsilon}
\newcommand{\Gvf}{\varphi}
\newcommand{\Gg}{\gamma}
\newcommand{\Gl}{\lambda}
\newcommand{\Gm}{\mu}
\newcommand{\Gt}{\theta}
\newcommand{\Gr}{\rho}
\newcommand{\Gs}{\sigma}
\newcommand{\GD}{\Delta}
\newcommand{\GG}{\Gamma}
\newcommand{\GO}{\Omega}
\newcommand{\beq}{\begin{equation}}
\newcommand{\eeq}{\end{equation}}
\def\ol{\overline}
\numberwithin{equation}{section}
\numberwithin{figure}{section}
\author{Hyeonbae Kang\thanks{Department of Mathematics and Institute of Applied Mathematics, Inha University, Incheon
22212, S. Korea (hbkang@inha.ac.kr, xiaofeilee@hotmail.com).}  \and Xiaofei Li\footnotemark[2] \and Shigeru Sakaguchi\thanks{Research Center for Pure and Applied Mathematics, Graduate School of Information Sciences, Tohoku University, Sendai, 980-8579, Japan (sigersak@tohoku.ac.jp).}}
\begin{document}
\title{Existence of coated inclusions of general shape weakly neutral to multiple fields in two dimensions\thanks{\footnotesize This work was
supported by NRF grants No. 2016R1A2B4011304 and 2017R1A4A1014735, JSPS KAKENHI Grant No.  JP16K13768 and by A3 Foresight Program among China (NSF), Japan (JSPS), and Korea (NRF 2014K2A2A6000567)}}

\maketitle

\begin{abstract}
A two dimensional inclusion of core-shell structure is neutral to multiple uniform fields if and only if the core and the shell are concentric disks, provided that the conductivity of the matrix is isotropic. An inclusion is said to be neutral if upon its insertion the uniform field is not perturbed at all. In this paper we consider inclusions of core-shell structure of general shape which are weakly neutral to multiple uniform fields. An inclusion is said to be weakly neutral if the field perturbation is mild. We show, by an implicit function theorem, that if the core is a small perturbation of a disk then we can coat it by a shell so that the resulting structure becomes weakly neutral to multiple uniform fields.
\end{abstract}

\section{Introduction}

Let $D$ and $\GO$ be bounded simply connected domains in $\Rbb^2$ such that $\ol{D}\subset \GO$. We regard $(\GO,D)$ as an inclusion of the core-shell structure where the core $D$ is coated by the shell $\GO\setminus D$. We then consider the following problem, which can be viewed as a conductivity problem or an anti-plane elasticity problem:
\beq\label{main}
\begin{cases}
\nabla\cdot\Gs\nabla u=0 \quad &\mbox{in }\Rbb^2,\\
u(x)-a \cdot x=O(|x|^{-1})\quad&\mbox{as }|x|\rightarrow\infty,
\end{cases}
\eeq
where $a$ is a unit vector representing the background uniform field and $\Gs$ is a piecewise constant function defined by
\beq\label{conductivity}
\Gs=\Gs_c\chi(D)+\Gs_s\chi(\GO\setminus D)+\Gs_m\chi(\Rbb^2\setminus \GO).
\eeq
Here $\chi$ denotes the characteristic function. The function $\Gs$ represents the conductivity distribution where the conductivities $\Gs_c$ (of the core), $\Gs_s$ (shell) and $\Gs_m$ (matrix) are assumed to be isotropic (scalar).

In absence of the inclusion $(\GO,D)$ the conductivity $\Gs$ is the constant $\Gs_m$ in $\Rbb^2$. So, the equation in \eqnref{main} is just the harmonic equation ($\GD u=0$ in $\Rbb^2$) and the solution is $u(x)=a \cdot x$. Thus the field is uniform, i.e., $\nabla u =a$. Therefore, $\nabla(u(x)-a \cdot x)$ in presence of $(\GO,D)$ can be viewed as the perturbation of the uniform field by insertion of inclusion.

Upon insertion of an inclusion the field is perturbed in general. However, there are inclusions of core-shell structure such that the field is not perturbed at all.
If $D$ and $\GO$ are two concentric disks in two dimensions, say $D=\{|x|<r_i\}$ and $\GO=\{|x|<r_e\}$, and the following relation holds:
\beq\label{neutral}
(\Gs_s+\Gs_c)(\Gs_m-\Gs_s)+ \Gr^2 (\Gs_s-\Gs_c)(\Gs_m+\Gs_s)=0,
\eeq
where $\Gr=r_i/r_e$, then the solution $u$ to \eqnref{main} satisfies
\beq\label{neutral2}
u(x)-a \cdot x \equiv 0 \quad\mbox{for all } x \in \Rbb^2\setminus \GO,
\eeq
in other words, the uniform field $-\nabla(a\cdot x)$ is {\it not} perturbed at all outside $\GO$. We emphasize that \eqnref{neutral} has a solution $\Gr^2$ only when
\beq\label{neutral3}
0 < \frac{(\Gs_s+\Gs_c)(\Gs_s-\Gs_m)}{(\Gs_s-\Gs_c)(\Gs_m+\Gs_s)} <1.
\eeq

But, it is proved that if the neutrality condition \eqnref{neutral2} is fulfilled for some coated structure $(\GO, D)$, then $\GO$ and $D$ are concentric disks in two dimensions \cite{KL2d, M.S}, and concentric balls in three dimensions \cite{KLS3d}, provided that $\Gs_m$ is isotropic. It is also proved in \cite{KL2d, M.S} that they are confocal ellipses if $\Gs_m$ is anisotropic (while $\Gs_c$ and $\Gs_s$ are isotropic). It is an open problem to extend this result to three dimensions. We refer to \cite{KLS3d} for a discussion on this problem: an over-determined problem for confocal ellipsoids and a formulation of the problem in terms of Newtonian potentials.

Since the neutral inclusion cannot take an arbitrary shape, we consider a notion of neutrality weaker than \eqnref{neutral2}.
The inclusion $(\GO,D)$, the core coated by the shell, is said to be weakly neutral to multiple uniform fields if the solution $u$ to \eqnref{main} satisfies
\beq\label{weakneutral}
u(x)-a \cdot x=O(|x|^{-2})\quad\mbox{as }|x|\rightarrow\infty
\eeq
for all unit vector $a$. While \eqnref{main} requires $u(x)-a \cdot x=O(|x|^{-1})$ at $\infty$, the neutrality requires $u(x)-a \cdot x \equiv 0$ outside $\GO$. Therefore, the weak neutrality condition \eqnref{weakneutral} is in between them: unlike the neutral inclusion, the weakly neutral inclusion does perturb the uniform field, but only mildly.

The weakly neutral inclusion is also called the polarization tensor vanishing structure. In fact, the solution $u$ to \eqref{main} admits the following dipole asymptotic expansion:
\beq\label{farfield}
u(x)-a\cdot x = \frac{1}{2\pi} \frac{\la Ma, x \ra}{|x|^2} + O(|x|^{-2}), \quad |x| \to \infty,
\eeq
where $M$ is a $2 \times 2$ matrix called the polarization tensor (abbreviated by PT), which is determined by $(\GO,D)$ and conductivities $(\Gs_c, \Gs_s, \Gs_m)$ (see, for example, \cite{AmKa07Book2, mbook}). One can see easily that \eqref{weakneutral} is satisfied if and only if the PT $M$ vanishes. We emphasize that the weakly neural inclusion cannot be constructed without the shell. In fact, if the shell $\GO\setminus D$ is empty, then the PT $M$ is either positive-definite or negative-definite depending on the sign of $\Gs_m-\Gs_c$ (see \cite{AmKa07Book2}).

We now formulate the problem:

\begin{itemize}
\item[]
{\bf Weakly neutral inclusion problem}. {\it Given a domain $D$ of arbitrary shape find a shell $\GO$ so that the resulting inclusion $(\GO, D)$ is weakly neutral to multiple uniform fields, or equivalently the corresponding PT $M=M(\GO, D)$ vanishes.}
\end{itemize}

Since insertion of neutral inclusions does not perturb the outside uniform field, the effective conductivity of the assemblage filled with such inclusions of many different scales is $\Gs_m$ satisfying \eqref{neutral}. This discovery made by Hashin and Shtrikman \cite{H1, H2} has a significant implication in the theory of composites for which we refer to \cite{mbook}. Since the leading order term of the low volume expansion for the effective conductivity of the dilute composite is expressed by the PT (see \cite{AKT05} and references therein), the weakly neutral inclusion is also related to the theory of composites. Neutral inclusions are also closely related to imaging and invisibility cloaking. Neutrality \eqnref{neutral2} and weak neutrality \eqnref{weakneutral} mean that the neutral inclusions cannot be probed by the uniform fields while the weakly neutral inclusion can be vaguely seen.
The neutral inclusions are also closely related to imaging and invisibility cloaking by transformation optics. It is shown in \cite{Pendry} that perfect cloaking is achieved by transforming a punctured disk (sphere) to an annulus (the same transformation was used to show non-uniqueness of the Calder\'on's problem in \cite{GLU}). It is then shown in \cite{KO2} that if we transform a disk with a small hole instead of the punctured disk (which is to avoid singularities of the conductivity), we achieve near-cloaking instead of perfect cloaking. If we coat the small hole by another disk so that the coated structure becomes neutral, and then transform it to an annulus, then near-cloaking is dramatically enhanced \cite{AKLL}.  See also \cite{AGJKLL, AKLL2,AKLLY,KO} for further developments to Helmholtz and Maxwell's equations.

In this paper we consider weakly neutral inclusions of general shape. We mention that the study on weakly neutral inclusions is in its early stage. There is a numerical study on this problem \cite{FKL}, but not a single weakly neutral inclusion other than concentric disks is known. We don't even know how to coat ellipses to achieve weakly neutral inclusions. The purpose of this paper is to show that weakly neutral inclusions of general shape do exist. In fact, we show that if the $D$ is a small perturbation of a disk, then there is a shell $\GO$ such that the resulting inclusion $(\GO, D)$ is weakly neutral to uniform fields.

To present the main result of this paper in a precise manner, let $W^{2,\infty}(T)$ be the  collection of all functions $f$ on the unit circle $T$ such that
$$
\| f \|_{2,\infty}: = \| f \|_\infty + \| f' \|_\infty + \| f'' \|_\infty < \infty,
$$
where $T$ is parametrized as
$$
T =\left\{ (\cos\eta, \sin\eta) \in \mathbb R^2  ~|~ \eta \in [0,2\pi) \right\}.
$$
Denote by $\delta = \delta(\eta, \theta)$ the intrinsic distance between the two points $(\cos\eta, \sin\eta), (\cos\theta, \sin\theta)$ in $T$, that is,
\beq\label{intrinsic distance}
\delta(\eta, \theta) = \min\{ |\eta-\theta|, 2\pi+(\eta-\theta), 2\pi -(\eta-\theta)\}\ (\in [0,\pi]) .
\eeq
The extrinsic distance between the two points in $\mathbb R^2$ is given by
\beq\label{extrinsic distance}
|(\cos\eta, \sin\eta) - (\cos\theta, \sin\theta)| = \sqrt{2(1-\cos(\eta-\theta))}.
\eeq
These two distances are comparable as
\beq\label{relationship between distances}
\frac 2\pi \delta(\eta, \theta) \le \sqrt{2(1-\cos(\eta-\theta))}\le \delta(\eta, \theta).
\eeq

Here and throughout this paper $\| \cdot \|_p$ denotes the usual $L^p$ norm. Let $D_0: =\{|x|<r_i\}$ for some radius $r_i$, and let $D_h$ be the perturbation of $D_0$ whose boundary is given by
\beq\label{Dh}
\p D_h=\left\{ ~x ~|~x =(r_i+ h(\hat{x}))\hat{x}, \quad |\hat{x}|=1 ~ \right\}.
\eeq
The perturbation function $h$ belongs to $W^{2,\infty}(T)$.

To find the shell, we choose $r_e$ so that $r_i$ and $r_e$ satisfy \eqnref{neutral} for given conductivities $\Gs_c$, $\Gs_s$ and $\Gs_m$ satisfying \eqnref{neutral3}, and let $\GO_0:= \{|x|<r_e\}$. Then $(\GO_0, D_0)$ is neutral. For $b \in W^{2,\infty}(T)$, we define $\GO_b$ as a perturbation of $\GO_0$ by
\beq\label{GOb}
\p \GO_b =\left\{ ~x ~|~x =(r_e + b(\hat{x})) \hat{x}, \quad |\hat{x}|=1 ~ \right\}.
\eeq
However, for $\GO_b$ we restrict ourselves to the three dimensional subspace, denoted by $W_3$, spanned by $\{ 1, \cos 2\Gt, \sin 2 \Gt \}$. We then identify $b=(b_1, b_2, b_3) \in \Rbb^3$ with
$$
b(\Gt)=b(\hat{x})=b_1+b_2\cos 2\Gt + b_3\sin 2\Gt,
$$
where $\hat{x}= (\cos \Gt, \sin\Gt)$. Note that we use notation $b$ for elements of both $\Rbb^3$ and $W_3$, but it does not cause any confusion.

If $h$ and $b$ are sufficiently small, then $D_h \subset \GO_b$ and hence the PT corresponding to $(\GO_b, D_h)$, which we denote by $M=M(h,b)$, is well-defined. The follwoing is the main result of this paper:
\begin{thm}\label{main_thm}
There is $\Ge>0$ such that for each $h \in W^{2,\infty}(T)$ with $\| h \|_{2,\infty}<\Ge$ there is $b=b(h) \in \Rbb^3$ such that
\beq
M(h, b(h))=0,
\eeq
namely, the inclusion $(\GO_{b(h)}, D_h)$ of the core-shell structure is weakly neutral to multiple uniform fields. The mapping $h \mapsto b(h)$ is continuous.
\end{thm}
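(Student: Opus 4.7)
The plan is to apply the implicit function theorem in Banach spaces to the map
\[
F: W^{2,\infty}(T) \times \Rbb^3 \longrightarrow \Rbb^3, \qquad F(h,b) = M(h,b),
\]
where we identify the space of symmetric $2\times 2$ polarization tensors with $\Rbb^3$. Note first that $F(0,0)=0$: by the choice of $r_e$ from $r_i$, the reference configuration $(\GO_0,D_0)$ satisfies \eqnref{neutral} and is therefore neutral, so its PT vanishes. Once $F$ is shown to be $C^1$ near $(0,0)$ with $\partial_b F(0,0)$ invertible, the implicit function theorem produces a continuous map $h\mapsto b(h)$ on a neighborhood $\|h\|_{2,\infty}<\Ge$ with $b(0)=0$ and $F(h,b(h))=0$, which is exactly the claim.

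For the regularity of $F$, I would rewrite $M(h,b)$ via a layer-potential reformulation of \eqnref{main}. The solution is expressed as a sum of single-layer potentials on $\partial D_h$ and $\partial\GO_b$ with densities $(\psi,\varphi)$ satisfying a coupled $2\times 2$ system whose block entries are the Neumann-Poincar\'e operators on the two boundaries together with the single-layer operators mapping between them; $M$ is then an explicit bilinear functional of $(\psi,\varphi)$. Pulling both boundaries back to the unit circle $T$ via \eqnref{Dh} and \eqnref{GOb} turns everything into operators on a fixed function space on $T$ whose kernels depend smoothly on $(h,b)\in W^{2,\infty}(T)\times\Rbb^3$ away from the diagonal, while the logarithmic singularity on the diagonal is uniform in $(h,b)$. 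Invertibility of the coupled operator at $(0,0)$ follows from well-posedness of \eqnref{main}; it persists on a neighborhood, and a Neumann-series argument yields $C^1$-dependence of $(\psi,\varphi)$, hence of $F$.

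The heart of the argument is the derivative $\partial_b F(0,0)$. At the reference point both boundaries are concentric circles, so all layer-potential operators diagonalize under the Fourier basis $\{e^{in\Gt}\}$ on $T$. A direct Hadamard-type computation shows that perturbing $\partial\GO_0$ by a function $b$ affects the PT at first order only through the Fourier modes $n=0$ and $n=\pm 2$ of $b$; any higher mode lies in the kernel of the linearization, which is precisely why the paper restricts to $W_3=\mathrm{span}\{1,\cos 2\Gt,\sin 2\Gt\}$. Moreover, by the rotational symmetry of the disk together with the reflection symmetries carried by $\cos 2\Gt$ and $\sin 2\Gt$, the matrix of $\partial_b F(0,0)$ in the bases $(1,\cos 2\Gt,\sin 2\Gt)$ and $(M_{11}+M_{22},\,M_{11}-M_{22},\,2M_{12})$ is diagonal.

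The principal obstacle is then to verify that the three diagonal entries are nonzero. The first entry, coming from a pure radial dilation of $\partial\GO_0$, is (up to a nonzero factor) the derivative with respect to $\Gr^2$ of the left-hand side of \eqnref{neutral}, namely $(\Gs_s-\Gs_c)(\Gs_m+\Gs_s)$, which is nonzero under \eqnref{neutral3}. The second and third entries coincide by rotational symmetry and reduce to an explicit Fourier-mode-$2$ calculation in which every relevant layer-potential operator acts as multiplication by a known scalar; the resulting quantity is again a nonzero rational function of $(\Gs_c,\Gs_s,\Gs_m,\Gr)$ under \eqnref{neutral3}. With $\partial_b F(0,0)$ invertible, the implicit function theorem delivers the continuous map $h\mapsto b(h)$ and finishes the proof.
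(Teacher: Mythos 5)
Your proposal follows essentially the same route as the paper: invoke the implicit function theorem for a map $F(h,b)=M(h,b)$ from $W^{2,\infty}(T)\times\Rbb^3$ to $\Rbb^3$, noting $F(0,0)=0$ by the neutrality of concentric disks, establish adequate regularity of the layer-potential reformulation pulled back to the unit circle, and then verify that $\partial_b F(0,0)$ is invertible. Within that shared frame, you do offer two genuine simplifications where the paper computes brute-force. First, you use rotational and reflection symmetry of $(\GO_0,D_0)$ to argue a priori that the Jacobian, expressed in the bases $\{1,\cos 2\Gt,\sin 2\Gt\}$ and $\{M_{11}+M_{22},\,M_{11}-M_{22},\,2M_{12}\}$, must be diagonal with equal second and third entries; the paper confirms this structure only after an explicit nine-entry computation involving Fourier expansions of the Poisson kernel. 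Second, you identify the $(1,1)$ entry with (a nonzero multiple of) $\partial_{\Gr^2}$ of the left-hand side of \eqnref{neutral}, namely $(\Gs_s-\Gs_c)(\Gs_m+\Gs_s)$, which is indeed nonzero under \eqnref{neutral3}; this is cleaner than the paper's derivation of the factor $-4\mu\tau\pi$. Both shortcuts are valid and conceptually illuminating.

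The one substantive gap is that the remaining diagonal entry---the Fourier-mode-$2$ contribution---is merely asserted to be ``a nonzero rational function of $(\Gs_c,\Gs_s,\Gs_m,\Gr)$'' and is never computed or bounded away from zero. This is precisely the nontrivial arithmetic the paper carries out in Section~4 (arriving at $\tau\pi/2$ with $\tau = r_e/\bigl((1/2-\mu)(1/2+\mu)\bigr) \neq 0$ because $|\mu|>1/2$); the answer is far from obvious in advance, in particular it turns out to be independent of $\Gs_c$ after the neutrality relation is used, and without doing the calculation one cannot rule out an accidental cancellation. You have reduced the hard step to a well-posed one-mode computation, but the nonvanishing has not actually been established. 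A secondary, more minor point: your claim of ``$C^1$ dependence'' of the densities via a Neumann series glosses over the fact that the pulled-back operator $\Acal(h,b)$ is not norm-continuous in $h$ away from $(0,0)$ (the paper only proves strong continuity at the center and weak continuity elsewhere, Proposition~\ref{prop:stab}), which is why the paper invokes a version of the implicit function theorem (Theorem~\ref{thm:ift}) that demands only continuity in $h$ and continuous differentiability in $b$, rather than joint $C^1$ regularity.
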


Theorem \ref{main_thm} shows that domains $D_h$ which are local perturbations of a disk can be coated by domains of the form $\GO_b$ so that the resulting inclusions become weakly neutral. The result is rather surprising since the shell is defined only by three bases, namely, $1$, $\cos 2\Gt$ and $\sin 2\Gt$. To the best of our knowledge, this is the first result to show existence of weakly neutral inclusions of general shape other than disks. On the other hand, this is an existence proof, and we do not know how to find $b$ to define the shell $\GO_b$. In this regards, it is worth mentioning that there is yet other way of achieving weakly neutral inclusions: by introducing an imperfect parameter on the boundary $\p D$. We refer to the very recent work \cite{KL} for construction of the imperfect parameter to achieve weakly neutral inclusions of arbitrary shape.

By switching roles of $h$ and $b$, one can prove the following theorem:
\begin{thm}\label{main_thm2}
There is $\Ge>0$ such that for each $h \in W^{2,\infty}(T)$ with $\| h \|_{2,\infty}<\Ge$ there is $b=b(h) \in \Rbb^3$ such that
the inclusion $(\GO_h, D_{b(h)})$ is weakly neutral to multiple uniform fields.
\end{thm}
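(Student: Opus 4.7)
The plan is to mimic the implicit-function-theorem argument established for Theorem \ref{main_thm}, with the roles of the core and shell perturbations interchanged. Let $M(h,b)$ denote the polarization tensor of the inclusion $(\GO_h, D_b)$, where $h \in W^{2,\infty}(T)$ parametrizes the outer boundary $\p\GO_h$ via the analogue of \eqnref{GOb} (without any restriction to a finite-dimensional subspace), and $b=(b_1,b_2,b_3)\in\Rbb^3$ parametrizes the inner boundary by
\[
\p D_b=\left\{x\mid x=(r_i+b(\hat{x}))\hat{x},\ |\hat{x}|=1\right\},\qquad b(\Gt)=b_1+b_2\cos 2\Gt+b_3\sin 2\Gt.
\]
Since $(\GO_0,D_0)$ is a pair of concentric disks satisfying \eqnref{neutral}, the full neutrality \eqnref{neutral2} holds and in particular $M(0,0)=0$. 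Because $M$ takes values in the space of symmetric $2\times 2$ matrices, which is three-dimensional, the dimension count matches that of $\Rbb^3$, placing us in the setting needed to solve $M(h,b)=0$ for $b$ in terms of $h$ via the implicit function theorem.

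Two ingredients are required: (i) continuous Fr\'echet differentiability of $(h,b)\mapsto M(h,b)$ on a neighborhood of the origin, and (ii) invertibility of the partial derivative $\p_b M(0,0):\Rbb^3\to\mathrm{Sym}_2(\Rbb)$. Item (i) is established by the same shape-derivative machinery used in Theorem \ref{main_thm}: $M$ is expressed via densities on $\p\GO_h\cup\p D_b$ solving a system of boundary integral equations built from the single layer potentials $\SO,\SD$ and the Neumann--Poincar\'e operators $\KOS,\KDS$, and these operators depend smoothly on the defining perturbations when $h$ and $b$ are small in $W^{2,\infty}(T)$. The estimates are completely symmetric to those already carried out when $\p\GO$ carried the three-dimensional parameter, so no new technical difficulty appears here.

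The heart of the argument, and the main obstacle, is verifying (ii). The shape derivative now acts on the interior boundary $\p D_0$ rather than on $\p\GO_0$, and the material contrasts $(\Gs_c,\Gs_s)$ take over the role previously played by $(\Gs_s,\Gs_m)$, so the explicit coefficients differ from those arising in Theorem \ref{main_thm} and must be recomputed. Using the Fourier diagonalization of $\SD$ and $\KDS$ on the disk $\p D_0$ (where $\cos k\Gt,\sin k\Gt$ are eigenfunctions and the modes of distinct order decouple), one finds that the constant mode $b_1$ merely rescales the core radius and therefore shifts $M$ along the identity direction, while the quadrupole modes $\cos 2\Gt$ and $\sin 2\Gt$ produce independent traceless symmetric contributions to $M$. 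The Jacobian $\p_b M(0,0)$ is therefore block-diagonal: a scalar block coming from $b_1$ and a $2\times 2$ block coming from $(b_2,b_3)$. The nonvanishing of these blocks reduces to the inequalities $\Gs_c\ne\Gs_s$, $\Gs_s\ne\Gs_m$, and $\Gs_s+\Gs_c\ne 0$, all of which are built into the contrast condition \eqnref{neutral3}. Once this nondegeneracy is in hand, the implicit function theorem produces a continuous map $h\mapsto b(h)$ with $b(0)=0$ and $M(h,b(h))=0$ for $\|h\|_{2,\infty}<\Ge$, which is the conclusion of the theorem.
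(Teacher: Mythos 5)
Your proposal takes essentially the same route as the paper: the paper's own ``proof'' of Theorem~\ref{main_thm2} is a one-line remark that says to switch the roles of $h$ and $b$, redo the Jacobian computation, and observe that the result is again nonzero (it works out to $-\bigl(\tfrac{r_e}{r_i}\tfrac{\tau\pi}{2}\bigr)^3\cdot 8\mu \neq 0$, the same determinant times a sign and a power of $r_e/r_i$), after which the implicit function theorem applies verbatim. Your trace/traceless decomposition of $\p_b M(0,0)$---$b_1$ moves $M$ along the identity, $(b_2,b_3)$ span the traceless symmetric part---is a correct and slightly more conceptual description of exactly the block structure the paper finds. One minor imprecision: the paper's recomputed Jacobian still involves $\mu=\tfrac{\Gs_s+\Gs_m}{2(\Gs_s-\Gs_m)}$ (through the neutrality relation $\Gl=-\mu\Gr^2$), so the nondegeneracy hinges on $\Gs_s+\Gs_m\neq 0$ and $|\mu|\neq 1/2$ rather than on $\Gs_s+\Gs_c\neq 0$ as you guessed; all of these are automatic from positivity of the conductivities, so the conclusion is unaffected.
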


Let us briefly describe how we prove Theorem \ref{main_thm}. It is known that $M(h,b)$ is a $2 \times 2$ symmetric matrix, i.e., it is of the form
\beq\label{PT}
M(h,b)=\begin{bmatrix}
	m_{11}(h,b) & m_{12}(h,b)\\
	m_{21}(h,b) & m_{22}(h,b)
\end{bmatrix}.
\eeq
By identifying $M$ with $(m_{11}, m_{22}, m_{12})$, we may regard it as a function from $U \times V$ into $\Rbb^3$, where $U$ is a small neighborhood of $0$ in $W^{2,\infty}(T)$ and $V$ is a small neighborhood of $0$ in $\Rbb^3$, i.e.,
$$
M: U \times V \subset W^{2,\infty}(T) \times \Rbb^3 \to \Rbb^3.
$$
Moreover, since $(\GO_0, D_0)$ is neutral to multiple fields, we have $M(0,0)=0$. We then show
\beq\label{bJacob}
\frac{\p (m_{11}, m_{22}, m_{12})}{\p (b_1,b_2, b_3)} (0,0) \neq 0.
\eeq
Then, Theorem \ref{main_thm} follows from the implicit function theorem in the following form \cite{KP}:
\begin{thm}[The implicit function theorem]\label{thm:ift}
Let $X$ be a Banach space. Let $U\times V$ be an open subset of $X\times \mathbb{R}^3$. Suppose that
$$
F=(F_1,F_2,F_3):(x,y)\in U\times V\rightarrow \mathbb{R}^3
$$
is continuous and has the property that the derivative of $F$ with respect to $y$ exists and is continuous at each point of $U\times V$. Further assume that at point $(x_0,y_0)\in U\times V$,
$$
F(x_0,y_0)=0 \quad \mbox{and} \quad   \frac{\p F}{\p y}(x_0,y_0) \neq 0.
$$
Then there exist neighborhood $N_1\subset U$ of $x_0$ and neighborhood $N_2\subset V$ of $y_0$ such that, for each $x$ in $N_1$, there is a unique $y\in N_2$ satisfying
$$
F(x,y)=0.
$$
The function $\hat{y}$, thereby uniquely defined near $x_0$ by the condition $\hat{y}(x)=y$, is continuous.
\end{thm}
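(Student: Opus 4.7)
The plan is to recast $F(x,y)=0$ as a fixed-point problem in $y$ and invoke the Banach contraction mapping principle. Since $F$ takes values in $\Rbb^3$ with $y\in\Rbb^3$, the derivative $A:=\frac{\p F}{\p y}(x_0,y_0)$ is a $3\times 3$ matrix, and the hypothesis $\frac{\p F}{\p y}(x_0,y_0)\neq 0$ is to be read as invertibility of $A$ (nonzero Jacobian determinant). After a translation one may assume $x_0=0$ and $y_0=0$. Define
\[
G(x,y):=y-A^{-1}F(x,y),
\]
so that $F(x,y)=0$ if and only if $y$ is a fixed point of $G(x,\cdot)$. Observe that $\frac{\p G}{\p y}(x,y)=I-A^{-1}\frac{\p F}{\p y}(x,y)$ is continuous on $U\times V$ and vanishes at $(0,0)$.

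First I would extract a uniform contraction bound on a small ball. By continuity of $\p_y G$ at $(0,0)$, there exist $r>0$ and a neighborhood $U_0\subset U$ of $0$ such that $\|\p_y G(x,y)\|_{\mathrm{op}}\le \tfrac12$ for all $x\in U_0$ and $|y|\le r$. Since $\p_y G$ is only assumed continuous, the Lipschitz estimate on $y\mapsto G(x,y)$ is obtained from the fundamental theorem of calculus along the segment in $\Rbb^3$,
\[
G(x,y)-G(x,y')=\int_0^1 \p_y G\bigl(x,\,y'+t(y-y')\bigr)(y-y')\,dt,
\]
which yields $|G(x,y)-G(x,y')|\le \tfrac12|y-y'|$ for all $x\in U_0$ and $|y|,|y'|\le r$. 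Next, continuity of $F$ together with $F(0,0)=0$ lets me choose a neighborhood $N_1\subset U_0$ such that $|A^{-1}F(x,0)|\le r/2$ for $x\in N_1$; then for $|y|\le r$,
\[
|G(x,y)|\le |G(x,y)-G(x,0)|+|A^{-1}F(x,0)|\le \tfrac12 r+\tfrac12 r=r.
\]
Thus $G(x,\cdot)$ is a $\tfrac12$-contraction mapping the complete space $\overline{B(0,r)}\subset\Rbb^3$ into itself, and the Banach fixed-point theorem delivers a unique $\hat y(x)\in\overline{B(0,r)}$ with $F(x,\hat y(x))=0$; set $N_2:=B(0,r)$.

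Continuity of $\hat y:N_1\to N_2$ follows by subtracting the fixed-point identities: for $x,x'\in N_1$,
\[
|\hat y(x)-\hat y(x')|\le \tfrac12|\hat y(x)-\hat y(x')|+\|A^{-1}\|\,|F(x,\hat y(x'))-F(x',\hat y(x'))|,
\]
so $|\hat y(x)-\hat y(x')|\le 2\|A^{-1}\|\,|F(x,\hat y(x'))-F(x',\hat y(x'))|\to 0$ as $x\to x'$ in $X$, by continuity of $F$. The main technical point to keep in mind is that the hypothesis supplies only continuity (not Lipschitz regularity or higher smoothness) of $\p_y F$, so the contraction estimate cannot be read off directly from a norm bound on $\p_y G$ and must instead be routed through the Riemann-integral representation above along a line segment in the finite-dimensional slice $\Rbb^3$; continuity of the matrix-valued map $t\mapsto \p_y G(x,y'+t(y-y'))$ on $[0,1]$ is enough to justify that step. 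The Banach structure of $X$ is never invoked beyond providing neighborhoods and the notion of continuity; the fixed-point iteration itself lives entirely in $\Rbb^3$.
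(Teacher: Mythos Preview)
Your argument is the standard contraction-mapping proof of the implicit function theorem and is essentially correct. One small cosmetic point: you apply the Banach fixed-point theorem on the closed ball $\overline{B(0,r)}$ but then set $N_2=B(0,r)$; to guarantee the fixed point lies in the \emph{open} ball and that uniqueness holds there, either take $N_2$ to be an open ball of slightly larger radius contained in $V$, or note that $\hat y(x_0)=0$ and shrink $N_1$ using the continuity of $\hat y$ you have already established so that $\hat y(N_1)\subset B(0,r)$.

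As for comparison with the paper: the paper does not supply a proof of this theorem at all. Theorem~\ref{thm:ift} is quoted as a tool, with the reference \cite{KP} (Krantz--Parks), and is then applied in Section~\ref{sec:proof} to the map $(h,b)\mapsto M(h,b)$. Your contraction-mapping argument is in fact the classical proof one finds in that reference, so there is no methodological divergence to discuss---you have simply filled in what the paper left to the literature.
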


This paper is organized as follows. In section \ref{sec:PT}, we review the definition of the PT in terms of a system of integral equations. In section \ref{sec:integral}, we consider stability of the system of integral equations. Theorem \ref{main_thm} is proved in section \ref{sec:proof}.

\section{Preliminary: layer potentials and PT}\label{sec:PT}

In this section we review basic properties of the PT and the related system of integral equations.

Let
$$
G(x)=\frac{1}{2\pi} \ln |x|,
$$
the fundamental solution to the Laplacian in two dimensions. For a bounded simple closed Lipschitz continuous curve $\GG$,
the single and double layer potentials of a function $\Gvf\in L^2(\Gamma)$ are defined by
\begin{align*}
\Sg[\Gvf](x) &:= \int_{\GG} G(x-y) \Gvf(y)\, ds(y), \quad x\in \Rbb^2, \\
\Dg[\psi](x) &:= \int_{\GG} \p_{\nu_y} G(x-y) \Gvf(y)\, ds(y), \quad x\in \Rbb^2 \setminus \GG,
\end{align*}
where $\p_{\nu_y}$ denotes the outward normal derivative with respect to $y$-variables. It is well known (see, for example, \cite{AmKa07Book2}) that the following jump relations hold:
\begin{align*}
\p_\nu \Sg[\Gvf](x)\big|_{\pm} &=(\pm \ds\frac{1}{2}I+\Kcal_{\GG}^{*})[\Gvf](x), \quad\mbox{a.e. } x\in\GG, \\
\Dg[\Gvf](x)\big|_{\pm} &=(\mp\ds\frac{1}{2}I+\Kg)[\Gvf](x), \quad\mbox{a.e. } x\in\GG,
\end{align*}
where the operator $\Kg$ on $\GG$ is defined by
\begin{equation*}
\Kg[\Gvf](x)= \frac{1}{2\pi} \int_{\GG} \frac{\la y-x,\nu(y)\ra}{|x-y|^2} \Gvf(y)\, ds(y),
\end{equation*}
and $\Kg^*$ is the $L^2$-adjoint of $\Kg$, namely,
\begin{equation*}
\Kg^*[\Gvf](x)= \frac{1}{2\pi} \int_{\GG} \frac{\la x-y,\nu(x)\ra}{|x-y|^2} \Gvf(y)\, ds(y).
\end{equation*}
Here, $\la \, , \, \ra$ denotes the scalar product in $\Rbb^2$ and subscripts $\pm$ denote the limit from outside and inside $\GG$, respectively. Set
$$L^2_0(\Gamma) = \left\{ ~\Gvf \in L^2(\Gamma) ~|~ \int_{\Gamma} \Gvf \, ds =0 ~\right\}.$$

Let $\GO$ and $D$ be two bounded simply connected planar domains such that $\ol{D} \subset \GO$, whose boundaries are assumed to be Lipschtiz continuous. We consider the problem \eqnref{main} when $a=(1,0)$ and $a=(0,1)$, whose solution is denoted by $u_1$ and $u_2$, respectively.
It is known (see, for example \cite{AmKa07Book2}) that $u_l$, $l=1,2$, can be represented as
$$
u_l(x)=x_l+\SD[\Gvf_1^{(l)}](x)+\SO[\Gvf_2^{(l)}](x),\quad x\in\Rbb^2,
$$
where $(\Gvf_1^{(l)},\Gvf_2^{(l)})\in L^2_0(\p D)\times  L^2_0(\p \GO)$ is the unique solution to the integral equations
\begin{equation}\label{pp}
\begin{bmatrix}
 -\lambda I+\KDS & \p_\nu\SO\\
 \p_\nu\SD & -\mu I+\KOS
\end{bmatrix}
\begin{bmatrix}
\Gvf_1^{(l)}\\
\Gvf_2^{(l)}
\end{bmatrix}
=-\begin{bmatrix}
\nu^l_{\p D}\\
\nu^l_{\p \GO}
\end{bmatrix},
\end{equation}
where $\nu^l_{\p D}$ is the $l$-th component of the outward unit normal vector $\nu_{\p D}$ to $\p D$, and $\nu^l_{\p\GO}$ is defined likewise. Here the numbers $\Gl$ and $\Gm$ are given by
\beq\label{lambdamu}
\Gl=\frac{\Gs_c+\Gs_s}{2(\Gs_c-\Gs_s)}\quad\mbox{and}\quad \mu=\frac{\Gs_s+\Gs_m}{2(\Gs_s-\Gs_m)}.
\eeq

We emphasize that in the expression of the operator $\p_\nu\SO$ appearing in \eqnref{pp} $\p_\nu$ denotes the normal derivative on $\p D$. Thus $\p_\nu\SO$ is a compact operator from $L^2_0(\p \GO)$ into $L^2_0(\p D)$. Likewise, $\p_\nu\SD$ is a compact operator from $L^2_0(\p D)$ into $L^2_0(\p \GO)$. So the operator in \eqnref{pp} is a compact perturbation of the operator
$$
\begin{bmatrix}
-\Gl I+\KDS & 0\\
0 & -\mu I+\KOS
\end{bmatrix}.
$$
Since $\Gs_c,\Gs_s,\Gs_m>0$, we have $|\Gl|, |\mu|>1/2$, and hence the operator above is invertible on $L^2_0(\p D)\times  L^2_0(\p \GO)$.

The PT $M=M(\GO,D)=(m_{ll'})_{l,l'=1}^2$ of the core-shell structure $(\GO,D)$ is defined by
\beq\label{PT0}
m_{ll'}=\int_{\p D}x_{l'}\Gvf_1^{(l)} \, ds+\int_{\p \GO}x_{l'} \Gvf_2^{(l)} \, ds, \quad l,l'=1,2.
\eeq
The far-field expansion \eqnref{farfield} of the solution $u$ to \eqnref{main} holds with this PT. It is known that $M$ is a symmetric matrix.

\section{Stability properties of the integral equations}\label{sec:integral}

We consider the system of integral equations \eqnref{pp} when $D=D_h$ and $\GO=\GO_b$ where $D_h$ and $\GO_b$ are defined by \eqnref{Dh} and \eqnref{GOb}, respectively:
\begin{equation}\label{pp1}
\begin{cases}
\ds \left( -\Gl I+\Kcal^*_{\p D_h} \right) [\Gvf_1] + \p_\nu\Scal_{\p\GO_b} [\Gvf_2] = \psi_1 \quad &\mbox{on } \p D_h ,\\
\ds \p_\nu\Scal_{\p D_h}[\Gvf_1] + \left( -\Gm I+\Kcal^*_{\p\GO_b} \right) [\Gvf_2] = \psi_2 \quad &\mbox{on } \p\GO_b ,
\end{cases}
\end{equation}
on $L^2_0(\p D_h) \times L^2_0(\p\GO_b)$. This system of equations admits a unique solution and there is a constant $C=C(h,b)$ such that
\beq\label{bounded}
\| \Gvf_1 \|_{L^2(\p D_h)} + \| \Gvf_2 \|_{L^2(\p\GO_b)} \le C (\| \psi_1 \|_{L^2(\p D_h)} + \| \psi_2 \|_{L^2(\p\GO_b)}).
\eeq

We now transform \eqnref{pp1} to a system of integral equations on $L^2_0(T)^2$ which is the collection of square integrable functions on the unit circle with the mean zero.
Recall that
$$
\Kcal^*_{\p D_h} [\Gvf_1](x) = \frac{1}{2\pi} \int_{\p D_h} \frac{\la x-y,\nu(x)\ra}{|x-y|^2} \Gvf_1(y)\, ds(y), \quad x \in \p D_h.
$$
Any point $x \in \p D_h$ can be written as
\beq\label{icoordi}
x =x_{i,h}(\eta):=(r_i+ h(\eta))(\cos\eta,\sin\eta), \quad \eta\in[0,2\pi).
\eeq
Then the normal vector $\nu(x_{i,h}(\eta))=:\nu_{i,h}(\eta)$ on $\p D_h$ is given by
\beq\label{inormal}
\nu_{i,h}(\eta) J_{i,h}(\eta) = \left((r_i + h(\eta))\cos\eta + h'(\eta) \sin\eta , (r_i + h(\eta)) \sin\eta - h'(\eta)\cos\eta \right) ,
\eeq
where
$$
J_{i,h}(\eta):=\sqrt{(r_i+h(\eta))^2+h'(\eta)^2}.
$$
By substituting $y= x_{i,h}(\Gt)$ we have
$$
\Kcal^*_{\p D_h} [\Gvf_1](x_{i,h}(\eta)) = \frac{1}{2\pi} \int_0^{2\pi} \frac{\la x_{i,h}(\eta) -x_{i,h}(\Gt) , \nu_{i,h}(\eta) \ra}{|x_{i,h}(\eta) -x_{i,h}(\Gt)|^2} \Gvf_1(x_{i,h}(\Gt)) J_{i,h}(\Gt)\, d\Gt.
$$
Let
$$
f_1(\Gt):= \Gvf_1(x_{i,h}(\Gt)) J_{i,h}(\Gt),
$$
and define the operator $A(h)$ on $L^2(T)$ by
$$
A(h)[f_1](\eta):= \int_{0}^{2\pi} A_h(\eta,\Gt) f_1(\Gt) \,  d\Gt,
$$
where the integral kernel $A_h (\eta,\Gt)$ is given by
\begin{align*}
A_h(\eta,\Gt) =\frac{1}{2\pi}\frac{\la x_{i,h}(\eta)-x_{i,h}(\Gt),\nu_{i,h}(\eta)  \ra}{|x_{i,h}(\eta)-x_{i,h}(\Gt)|^2} J_{i,h}(\eta).
\end{align*}
Then the following relation holds:
\beq\label{Arel}
A(h)[f_1](\eta) = J_{i,h}(\eta) \Kcal^*_{\p D_h} [\Gvf_1](x_{i,h}(\eta)).
\eeq

Similarly, points $x \in \p\GO_b$ can be written as
\beq\label{ecoordi}
x=x_{e,b}(\eta): =(r_e+ b(\eta))(\cos\eta,\sin\eta), \quad \eta\in[0,2\pi).
\eeq
Then the normal vector $\nu(x_{e,b}(\eta))=:\nu_{e,b}(\eta)$ on $\p\GO_b$ is given by
\beq\label{enormal}
\nu_{e,b}(\eta) J_{e,b}(\eta) = \left((r_e+b(\eta))\cos\eta + b'(\eta) \sin\eta , (r_e+b(\eta)) \sin\eta - b'(\eta)\cos\eta \right),
\eeq
where
$$
J_{e,b}(\eta):=\sqrt{(r_e + b(\eta))^2 + b'(\eta)^2}.
$$
Define the operator $B(b)$ on $L^2(T)$ by
$$
B(b)[f_2](\eta):= \int_{0}^{2\pi} B_b(\eta,\Gt) f_2(\Gt) \,  d\Gt,
$$
where the integral kernel $B_b (\eta,\Gt)$ is given by
\beq\label{keb_ker}
B_b (\eta,\Gt)=\frac{1}{2\pi}\frac{\la x_{e,b}(\eta)-x_{e,b}(\Gt),\nu_{e,b}(\eta)  \ra}{|x_{e,b}(\eta)-x_{e,b}(\Gt)|^2} J_{e,b}(\eta).
\eeq
Then the following relation holds:
\beq\label{Brel}
B(b)[f_2](\eta) = J_{e,b}(\eta) \Kcal^*_{\p D_h} [\Gvf_2](x_{e,b}(\eta)),
\eeq
where
$$
f_2(\Gt):= \Gvf_2(x_{e,b}(\Gt)) J_{e,b}(\Gt).
$$

One can also see that
\beq\label{Crel}
J_{i,h}(\eta) \p_\nu\Scal_{\p\GO_b} [\Gvf_2](x_{i,h}(\eta)) = C(h,b)[f_2](\eta),
\eeq
where the operator $C(h,b)$ is defined by
$$
C(h,b)[f_2](\eta) := \int_0^{2\pi} C_{h,b} (\eta,\Gt) f_2(\Gt) \ d\Gt,
$$
with
\begin{equation*}
C_{h,b} (\eta,\Gt)=\frac{1}{2\pi}\frac{\la x_{i,h}(\eta) - x_{e,b}(\Gt),  \nu_{i,h}(\eta)  \ra}{|x_{i,h}(\eta) - x_{e,b}(\Gt)|^2} J_{i,h}(\eta).
\end{equation*}

We further have
\beq\label{Drel}
J_{e,b}(\eta) \p_\nu\Scal_{\p D_h} [\Gvf_1](x_{e,b}(\eta)) = D(h,b)[f_1](\eta),
\eeq
where the operator $D(h,b)$ is defined by
$$
D(h,b)[f_1](\eta) := \int_0^{2\pi} D_{h,b} (\eta,\Gt) f_1(\Gt) \ d\Gt,
$$
with
\begin{equation*}
D_{h,b} (\eta,\Gt)=\frac{1}{2\pi}\frac{\la x_{e,b}(\eta) - x_{i,h}(\Gt),  \nu_{e,b}(\eta)  \ra}{|x_{e,b}(\eta) - x_{i,h}(\Gt)|^2} J_{e,b}(\eta).
\end{equation*}

We obtain from \eqnref{pp1} that
$$
\begin{cases}
\ds J_{i,h}(\eta) \left( -\Gl I+\Kcal^*_{\p D_h} \right) [\Gvf_1] (x_{i,h}(\eta)) + J_{i,h}(\eta) \p_\nu\Scal_{\p\GO_b} [\Gvf_2] (x_{i,h}(\eta)) = J_{i,h}(\eta) \psi_1 (x_{i,h}(\eta)) , \\
\ds J_{e,b}(\eta) \p_\nu\Scal_{\p D_h}[\Gvf_1] (x_{e,b}(\eta)) + J_{e,b}(\eta) \left( -\Gm I+\Kcal^*_{\p\GO_b} \right) [\Gvf_2] (x_{e,b}(\eta)) = J_{e,b}(\eta) \psi_2 (x_{e,b}(\eta)) .
\end{cases}
$$
It then follows from \eqnref{Arel}, \eqnref{Brel}, \eqnref{Crel}, and \eqnref{Drel} that
$$
\begin{cases}
\ds \left( -\Gl I+ A(h) \right) [f_1] + C(h,b) [f_2]  = g_1 , \\
\ds D(h,b) [f_1] + \left( -\Gm I+ B(b) \right) [f_2]  = g_2 ,
\end{cases}
$$
where
$$
g_1(\eta):= J_{i,h}(\eta) \psi_1 (x_{i,h}(\eta)) \quad\mbox{and}\quad g_2(\eta):= J_{e,b}(\eta) \psi_2 (x_{e,b}(\eta)).
$$
Let
\begin{equation*}
\Acal(h,b) :=
\begin{bmatrix}
-\Gl I+ A(h) & C(h,b) \\
D(h,b) & -\Gm I+ B(b)
\end{bmatrix}
\end{equation*}
and $f=(f_1,f_2)^\top$, $g=(g_1,g_2)^\top$. Then the above system of integral equations can be written in short as
\begin{equation*}
\Acal(h,b)f=g.
\end{equation*}
We emphasize that this equation is on $L^2_0(T)^2$ since $f_j, g_j \in L^2_0(T)$ for $j=1,2$. Furthermore, \eqnref{bounded} shows that there is a constant $C=C(h,b)$ such that
$$
\| \Acal(h,b)^{-1} g \|_{2}\le C \| g \|_{2} ,
$$
where $\| \cdot \|_{2}$ denotes the norm on $L^2(X)^2$.

In the rest of this section we consider the continuity and differentiability of the operator $\Acal(h,b)$. We first obtain the following proposition for the continuity.
\begin{prop}\label{prop:stab}
There is $\Ge >0$ such that if $h, b \in W^{2,\infty}(T)$ and $\| h \|_{2,\infty} + \| b \|_{2,\infty} \le \Ge$, then
\begin{itemize}
\item[(i)] $\Acal(h,b)$ is continuous at $(h,b)=(0,0)$ strongly, namely, there is a constant $C$ such that
\beq\label{contione}
\| (\Acal(h,b)- \Acal(0,0)) f \|_2 \le C(\| h \|_{2,\infty} + \| b \|_{2,\infty}) \| f \|_2
\eeq
for all $f \in L^2(T)^2$,
\item[(ii)] $\Acal(h,b)$ is continuous at $(h,b) \neq (0,0)$ weakly, namely, for each $f \in L^2(T)^2$
\beq\label{contitwo}
\| (\Acal(k,d)- \Acal(h,b)) f \|_2 \to 0
\eeq
as $\| k-h \|_{2,\infty} + \| d-b \|_{2,\infty} \to 0$.
\end{itemize}
\end{prop}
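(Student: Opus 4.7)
The operator $\Acal(h,b)$ is a $2\times 2$ block operator on $L^2_0(T)^2$ whose identity contributions $-\Gl I$ and $-\Gm I$ are independent of $(h,b)$. Both (i) and (ii) therefore reduce to analogous $L^2(T)\to L^2(T)$ continuity statements for the four kernel operators $A(h)$, $B(b)$, $C(h,b)$, $D(h,b)$. Since $B(b)$ has the same form as $A(h)$ (with $r_e,b$ replacing $r_i,h$) and $D(h,b)$ is the mirror of $C(h,b)$, I focus on $A(h)$ and $C(h,b)$.

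\textbf{Off-diagonal block.} For $\|h\|_\infty + \|b\|_\infty < (r_e-r_i)/2$ one has the uniform separation $|x_{i,h}(\eta) - x_{e,b}(\Gt)| \ge (r_e-r_i)/2$, so the kernel $C_{h,b}(\eta,\Gt)$ is smooth on $T\times T$ and depends rationally on the pointwise values of $h,h',b,b'$ via the explicit formulas for $x_{i,h}$, $x_{e,b}$, $\nu_{i,h}$, and $J_{i,h}$. Direct differentiation and the mean value theorem then give
\[
|C_{h,b}(\eta,\Gt) - C_{h_0,b_0}(\eta,\Gt)| \le C\bigl(\|h-h_0\|_\infty + \|h'-h_0'\|_\infty + \|b-b_0\|_\infty + \|b'-b_0'\|_\infty\bigr),
\]
uniformly in $(\eta,\Gt)$. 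Since $T\times T$ has finite measure, Schur's test yields the corresponding $L^2$ operator bound, which handles both (i) (with $(h_0,b_0)=(0,0)$) and (ii). The same argument applies to $D(h,b)$.

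\textbf{Diagonal block.} This is the technical heart. Setting $\Ga=r_i+h(\eta)$ and $\Gb=r_i+h(\Gt)$, a direct computation gives
\begin{align*}
|x_{i,h}(\eta)-x_{i,h}(\Gt)|^2 &= (h(\eta)-h(\Gt))^2 + 2\Ga\Gb\bigl(1-\cos(\eta-\Gt)\bigr), \\
\langle x_{i,h}(\eta)-x_{i,h}(\Gt),\, \nu_{i,h}(\eta)\, J_{i,h}(\eta)\rangle &= \Ga\Gb\bigl(1-\cos(\eta-\Gt)\bigr) + \Ga(h(\eta)-h(\Gt)) - \Gb h'(\eta)\sin(\eta-\Gt).
\end{align*}
Using the Taylor expansions $h(\eta)-h(\Gt)=h'(\Gt)(\eta-\Gt) + O(\|h''\|_\infty(\eta-\Gt)^2)$, $\sin(\eta-\Gt) = (\eta-\Gt) + O((\eta-\Gt)^3)$, and $h'(\eta) - h'(\Gt) = O(\|h''\|_\infty(\eta-\Gt))$, the order-$(\eta-\Gt)$ terms in the numerator cancel, leaving a remainder of order $(1+\|h\|_{2,\infty})(\eta-\Gt)^2$. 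Together with the lower bound $|x_{i,h}(\eta)-x_{i,h}(\Gt)|^2 \ge c\,\delta(\eta,\Gt)^2$, valid for $\|h\|_{2,\infty}$ sufficiently small by \eqnref{relationship between distances}, this shows $A_h(\eta,\Gt)$ is uniformly bounded on $T\times T$ whenever $\|h\|_{2,\infty}<\Ge$. For (i), the same cancellation applied to the difference $A_h-A_0$ (equivalently to $2\,\mathrm{(numerator)}-\mathrm{(denominator)}$) produces the pointwise bound $|A_h(\eta,\Gt)-A_0(\eta,\Gt)| \le C\|h\|_{2,\infty}$, and Schur's test then finishes (i) for $A(h)$.

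\textbf{Weak continuity and main obstacle.} For (ii), fix $f \in L^2(T)^2$ and apply the Lebesgue dominated convergence theorem: on the full-measure set $\{\eta\ne\Gt\}$ the pointwise convergence $A_k(\eta,\Gt)\to A_h(\eta,\Gt)$ as $\|k-h\|_{2,\infty}\to 0$ is immediate from the explicit formulas, and the uniform bound from the preceding paragraph supplies the dominating function. The blocks $B,C,D$ are treated identically. The main technical obstacle throughout is maintaining the second-order cancellation $\langle x_{i,h}(\eta)-x_{i,h}(\Gt),\nu_{i,h}(\eta)\rangle = O(\delta^2)$ in the perturbed setting; this is precisely where the $W^{2,\infty}$ control on $h$ (rather than merely $W^{1,\infty}$) is genuinely needed, in order to absorb the $h''$ remainders that appear in the Taylor expansion of $h'(\eta)-h'(\Gt)$.
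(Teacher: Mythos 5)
Your proposal is correct and follows essentially the same route as the paper: control the $A,B$ kernels by a second-order Taylor cancellation (which is exactly where $W^{2,\infty}$ enters), observe the off-diagonal $C,D$ kernels are smooth by the uniform separation $|x_{i,h}-x_{e,b}|\ge (r_e-r_i)/2$, pass from pointwise kernel bounds to $L^2$ operator bounds, and obtain (ii) by Lebesgue's dominated convergence. One minor technical point that the paper handles explicitly but your write-up glosses over: the Taylor expansion of $h(\eta)-h(\Gt)$ in powers of $\eta-\Gt$ must actually be performed in powers of the intrinsic distance $\delta(\eta,\Gt)$, invoking $2\pi$-periodicity when $|\eta-\Gt|$ is near $2\pi$ while $\delta$ is small; otherwise the quotient $(\eta-\Gt)^2/\delta^2$ appearing in your bound is unbounded near the wrap-around.
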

\begin{proof}
Let us first deal with the operator $B(b)$. Here we consider $b \in W^{2,\infty}(T)$, not necessarily in the subspace $W_3$. One can easily see from \eqnref{enormal} that
\begin{align*}
& \la x_{e,b}(\eta)-x_{e,b}(\Gt) ,\nu_{e,b}(\eta) J_{e,b}(\eta) \ra \\
& = (r_e+b(\eta))^2 - (r_e+b(\eta)) (r_e+b(\Gt)) \cos(\eta-\Gt) - b'(\eta) (r_e+b(\Gt)) \sin(\eta-\Gt).
\end{align*}
Therefore, we have
\beq\label{expnum}
\la x_{e,b}(\eta)-x_{e,b}(\Gt) ,\nu_{e,b}(\eta) J_{e,b}(\eta) \ra
= r_e^2 (1- \cos(\eta-\Gt)) (1 + R_1(b,\eta,\Gt)),
\eeq
where
\begin{align}
R_1(b;\eta, \Gt) &= \frac{r_e(b(\eta)+b(\Gt)) + b(\eta)b(\Gt) }{r_e^2}\nonumber  \\
&+ \frac{(b(\eta)-b(\Gt))^2+(r_e+b(\theta))\left\{b(\eta)+b^\prime(\eta)\sin(\theta-\eta)-b(\theta)\right\}}{ r_e^2 (1-\cos(\eta -\Gt))}.\label{Rone}
\end{align}
 In view of the intrinsic distance $\delta(\eta,\Gt)$ given by \eqref{intrinsic distance}, we may distinguish four cases.
For instance, let us consider the case where $\delta= \delta(\eta,\Gt) = 2\pi -(\eta-\theta)$. Then  taking the $2\pi$-periodicity into account yields that
\begin{align}
 b(\eta) + b'(\eta) \sin(\Gt-\eta)- b(\Gt) &= b(\eta) + b'(\eta) \sin(\delta)- b(\eta+\delta)\nonumber\\
&=  b(\eta) + b'(\eta) \delta- b(\eta+\delta) + O(\| b\|_{1,\infty} \delta^3)\nonumber\\
&= O(\| b\|_{2,\infty} \delta^2)\label{Taylor theorem is applied}.
\end{align}
The other three cases are dealt with similarly.  Since $(b(\eta)-b(\Gt))^2 = O(\| b\|_{1,\infty} \delta(\eta,\Gt)^2)$,
we infer that
$$
\left|(b(\eta)-b(\Gt))^2+(r_e+b(\theta))\left\{b(\eta)+b^\prime(\eta)\sin(\theta-\eta)-b(\theta)\right\}\right|\le C \| b \|_{2,\infty}  \delta(\eta,\Gt)^2.
$$
Hence it follows from \eqref{relationship between distances} that
\beq\label{Rone2}
\sup_{\eta, \Gt \in [0,2\pi)} |R_1(b; \eta, \Gt)| \le C \| b \|_{2,\infty}.
\eeq

One can also see that
$$
\left| x_{e,b}(\eta)-x_{e,b}(\Gt) \right|^2
= 2 \left[ r_e^2 + r_e (b(\eta)+b(\Gt)) + b(\eta)b(\Gt) \right] (1-\cos(\eta -\Gt)) + ( b(\eta)-b(\Gt))^2 .
$$
Thus we have
\beq\label{expden}
\left| x_{e,b}(\eta)-x_{e,b}(\Gt) \right|^2 = 2 r_e^2 (1-\cos(\eta -\Gt)) (1 + R_2(b;\eta, \Gt)),
\eeq
where
\beq\label{Rtwo}
R_2(b;\eta, \Gt) = \frac{r_e(b(\eta)+b(\Gt)) + b(\eta)b(\Gt) }{r_e^2}  + \frac{( b(\eta)-b(\Gt))^2}{2 r_e^2 (1-\cos(\eta -\Gt))}.
\eeq
One can easily see that
\beq\label{Rtwo2}
\sup_{\eta, \Gt \in [0,2\pi)} |R_2 (b;\eta, \Gt)| \le C \|b \|_{1,\infty} .
\eeq

If $\|b \|_{2,\infty} \le \Ge$ for a sufficiently small $\Ge$, then we have from \eqnref{keb_ker}, \eqnref{expnum} and \eqnref{expden} that
\beq\label{BRoneRtwo}
B_b (\eta,\Gt)=\frac{1}{4\pi}\left[ 1+ \frac{R_1(b;\eta,\Gt)- R_2(b;\eta,\Gt)}{1 + R_2(b;\eta, \Gt)} \right].
\eeq
Since
$$
B_b (\eta,\Gt) - B_0 (\eta,\Gt) =\frac{1}{4\pi} \frac{R_1(b;\eta,\Gt)- R_2(b;\eta,\Gt)}{1 + R_2(b;\eta, \Gt)} ,
$$
one can see immediately from \eqnref{Rone2} and \eqnref{Rtwo2} that
\beq\label{Bone}
\| (B(b)-B(0)) f_2 \|_2 \le C \|b\|_{2,\infty} \|f_2 \|_2
\eeq
for all $f_2 \in L^2(T)$.

Suppose $f_2 \in L^2(T)$. Then
$$
(B(d)-B(b)) [f_2](\eta) = \int_0^{2\pi} (B_d (\eta,\Gt) - B_b (\eta,\Gt)) f_2(\Gt) \, d\Gt.
$$
If $\| d -b \|_{2, \infty} \to 0$, then $B_d (\eta,\Gt) - B_b (\eta,\Gt) \to 0$ unless $\eta =\Gt$. Moreover, we have
$$
\left| (B_d (\eta,\Gt) - B_b (\eta,\Gt)) f_2(\Gt) \right| \le C(\| b \|_{2, \infty} + \| d \|_{2, \infty}) |f_2(\Gt)|.
$$
Thus, by Lebesgue's dominated convergence theorem, we infer that $(B(d)-B(b)) [f_2](\eta) \to 0$ for each $\eta$. We then apply Lebesgue's dominated convergence theorem to conclude that
\beq\label{Btwo}
\| (B(d)-B(b)) f_2 \|_2 \to 0
\eeq
for each fixed $f_2$.

One can show in a similar way that
\beq\label{Aone}
\| (A(h)-A(0)) f_1 \|_2 \le C \|h\|_{2,\infty} \|f_1 \|_2
\eeq
for all $f_1 \in L^2(T)$, and
\beq\label{Atwo}
\| (A(k)-A(h)) f_1 \|_2 \to 0
\eeq
as $\| k-h \|_{2, \infty} \to 0$ for each fixed $f_1$.

To handle $C(h,b)$, let
\beq\label{alpha}
\Ga(h,b;\eta,\Gt):=\la x_{i,h}(\eta) - x_{e,b}(\Gt),  \nu_{i,h}(\eta) J_{i,h}(\eta) \ra
\eeq
and
\beq\label{beta}
\Gb(h,b;\eta,\Gt):= |x_{i,h}(\eta) - x_{e,b}(\Gt)|^2
\eeq
so that
\beq\label{GaoverGb}
C_{h,b}(\eta,\Gt)= \frac{\Ga(h,b;\eta,\Gt)}{2\pi \Gb(h,b;\eta,\Gt)}.
\eeq
One can easily see that
$$
\sup_{\eta, \Gt \in [0,2\pi)} | \Ga(h,b;\eta,\Gt)- \Ga(k,d;\eta,\Gt)| \le C(\| h-k \|_{1,\infty} + \| b-d \|_{1,\infty})
$$
and
$$
\sup_{\eta, \Gt \in [0,2\pi)} | \Gb(h,b;\eta,\Gt)- \Gb(k,d;\eta,\Gt)| \le C(\| h-k \|_{\infty} + \| b-d \|_{\infty}).
$$
Furthermore, we have
\beq\label{lower}
\Gb(h,b;\eta,\Gt)= |x_{i,h}(\eta)-x_{e,b}(\Gt)|^2 \ge \frac{1}{4} (r_e-r_i)^2,
\eeq
provided that $\| h \|_\infty$ and $\| b \|_\infty$ are sufficiently small. Thus we infer that
$$
\sup_{\eta, \Gt \in [0,2\pi)} | C_{h,b}(\eta,\Gt)- C_{k,d}(\eta,\Gt)| \le C(\| h-k \|_{1,\infty} + \| b-d \|_{1,\infty}),
$$
from which we conclude that
\beq\label{Conetwo}
\| (C(h,b)-C(k,d)) f_2 \|_2 \le C (\| h-k \|_{1,\infty} + \| b-d \|_{1,\infty}) \| f_2 \|_2
\eeq
for all $f_2 \in L^2(T)$.

Similarly one can show that
\beq\label{Donetwo}
\| (D(h,b)-D(k,d)) f_1 \|_2 \le C (\| h-k \|_{1,\infty} + \| b-d \|_{1,\infty}) \| f_1 \|_2
\eeq
for all $f_1 \in L^2(T)$.
Now \eqnref{contione} follows from \eqnref{Bone}, \eqnref{Aone}, \eqnref{Conetwo} and \eqnref{Donetwo}, while \eqnref{contitwo} follows from \eqnref{Btwo}, \eqnref{Atwo}, \eqnref{Conetwo} and \eqnref{Donetwo}. This completes the proof.
\end{proof}

Since
$$
\Acal(h,b)^{-1} = \left\{I + \Acal(0,0)^{-1}(\Acal(h,b) - \Acal(0,0))\right\}^{-1} \Acal(0,0)^{-1},
$$
we obtain the following corollary as an immediate consequence of \eqnref{contione}.

\begin{cor}\label{cor}
There is $\Ge >0$ such that
$$
\| \Acal(h,b)^{-1} g \|_{2} \le C \| g \|_{2}
$$
for all $g \in L_0^2(T)^2$ for some $C$ independent of $h$ and $b$ satisfying $\|h\|_{2,\infty} + \|b\|_{2,\infty} < \Ge$.
\end{cor}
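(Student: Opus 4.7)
The plan is to invoke a Neumann series argument based on the factorization
$$
\Acal(h,b)^{-1} = \bigl\{I + \Acal(0,0)^{-1}(\Acal(h,b)-\Acal(0,0))\bigr\}^{-1} \Acal(0,0)^{-1}
$$
displayed immediately before the corollary. Two ingredients are needed: a fixed operator bound $\|\Acal(0,0)^{-1}\| \le C_0$ on $L^2_0(T)^2$, and a perturbation estimate of the form $\|\Acal(h,b)-\Acal(0,0)\| = O(\|h\|_{2,\infty}+\|b\|_{2,\infty})$ in the same operator norm.

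For the first ingredient, I will use the fact that $(\GO_0,D_0)$ consists of two concentric disks, so the uniform solvability assertion \eqnref{bounded} applies to \eqnref{pp1} on $L^2_0(\p D_0)\times L^2_0(\p\GO_0)$ with some constant $C(0,0)$. The parametrizations \eqnref{icoordi} and \eqnref{ecoordi} with $h=b=0$ have the constant Jacobians $J_{i,0}\equiv r_i$ and $J_{e,0}\equiv r_e$, so the change of variables between $L^2_0(\p D_0)\times L^2_0(\p\GO_0)$ and $L^2_0(T)^2$ is an isomorphism with constant norm, and \eqnref{bounded} transfers to the desired bound $\|\Acal(0,0)^{-1}g\|_2 \le C_0\|g\|_2$. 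For the second ingredient I appeal directly to Proposition \ref{prop:stab}(i), which gives $\|(\Acal(h,b)-\Acal(0,0))f\|_2 \le C_1(\|h\|_{2,\infty}+\|b\|_{2,\infty})\|f\|_2$ for all $f \in L^2(T)^2$.

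Combining the two estimates yields
$$
\bigl\|\Acal(0,0)^{-1}(\Acal(h,b)-\Acal(0,0))\bigr\| \le C_0 C_1 (\|h\|_{2,\infty}+\|b\|_{2,\infty}).
$$
I will then choose $\Ge>0$ small enough so that the right-hand side is at most $1/2$ whenever $\|h\|_{2,\infty}+\|b\|_{2,\infty}<\Ge$. The Neumann series for the middle factor in the displayed identity converges and has norm at most $2$, and the factorization above gives $\|\Acal(h,b)^{-1}\|\le 2C_0$, uniformly in $(h,b)$. The only step that requires even minor care is the transfer of \eqnref{bounded} at $(h,b)=(0,0)$ into a norm bound for $\Acal(0,0)$ on $L^2_0(T)^2$, but because $J_{i,0}$ and $J_{e,0}$ are constants this is purely bookkeeping. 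The heavy lifting—perturbation control of the integral kernels—is already packaged in Proposition \ref{prop:stab}(i), so the argument is essentially a standard Neumann series once those inputs are recorded.
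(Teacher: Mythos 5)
Your proof is correct and follows exactly the same route as the paper: the displayed factorization of $\Acal(h,b)^{-1}$ together with the strong-continuity estimate \eqref{contione} from Proposition \ref{prop:stab}(i) yields the uniform bound via a Neumann series, which is precisely what the paper intends when it calls the corollary ``an immediate consequence of \eqref{contione}.'' Your additional remark on transferring \eqref{bounded} from $L^2_0(\p D_0)\times L^2_0(\p\GO_0)$ to $L^2_0(T)^2$ via the constant Jacobians $J_{i,0}\equiv r_i$, $J_{e,0}\equiv r_e$ is a reasonable piece of bookkeeping that the paper leaves implicit.
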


We now look into differentiability of $\Acal(h,b)$ with respect to $b$ when $b$  belongs to $W_3$, i.e., is of the form
\beq\label{b0}
b = b_1 + b_2 \cos 2\Gt + b_3 \sin 2\Gt.
\eeq
We then identify $b$ with $(b_1,b_2,b_3)$. In this case $\| b \|_{2,\infty}$ is equivalent to
$$
|b|_\infty := \max_{1 \le j \le 3} |b_j|.
$$

Let $\p_j$ denote the partial derivative with respect to $b_j$ ($j=1,2,3$). Thanks to \eqnref{lower}, one can easily see that
$$
\sup_{\eta,\Gt \in [0,2\pi)} \left| \p_j C_{h,b}(\eta,\Gt) \right|  \le C
$$
for some constant $C$.
Moreover, if $k \in W^{2,\infty}(T)$ and $d= d_1 + d_2 \cos 2\Gt + d_3 \sin 2\Gt$, then
$$
\sup_{\eta,\Gt \in [0,2\pi)} \left| \p_j C_{h,b}(\eta,\Gt) - \p_j C_{k,d}(\eta,\Gt) \right|  \le C (\| h-k \|_{1,\infty} + |b-d|_\infty).
$$
Thus we see that $\p_j  C(h,b)$ is bounded on $L^2(T)$ and
\beq\label{Cderi}
\left\| \left( \p_j  C(h,b) - \p_j  C(k,d) \right) f_2 \right \|_2 \le C (\| h-k \|_{1,\infty} + |b-d|_\infty) \| f_2 \|_2
\eeq
for all $f_2 \in L^2(T)$. Similarly one can see that $\p_j  D(h,b)$ is bounded on $L^2(T)$ and
\beq\label{Dderi}
\left\| \left( \p_j  D(h,b) - \p_j  D(k,d) \right) f_1 \right \|_2 \le C (\| h-k \|_{1,\infty} + |b-d|_\infty) \| f_1 \|_2
\eeq
for all $f_1 \in L^2(T)$.

If $b$ and $d$ take the form \eqnref{b0}, then one can see from explicit forms of $R_1$ and $R_2$ in \eqnref{Rone} and \eqnref{Rtwo} that for $l=1,2$ and $j,k=1,2,3$
\begin{align}
&\sup_{\eta,\Gt \in [0,2\pi)} |\p_j R_l(b,\eta,\Gt)| \le C, \label{derivative bounds}\\
&\sup_{\eta,\Gt \in [0,2\pi)} |R_l(b,\eta,\Gt)- R_l(d,\eta,\Gt)| \le C |b-d|_\infty, \label{Lipschitz bounds}\\
&\sup_{\eta,\Gt \in [0,2\pi)} |\p_k\p_j R_l(b,\eta,\Gt)| \le C, \label{2nd derivative bounds}\\
&\sup_{\eta,\Gt \in [0,2\pi)} |\p_j R_l(b,\eta,\Gt)-\p_j R_l(d,\eta,\Gt)| \le C |b-d|_\infty.\label{jkest}
\end{align}
In fact, \eqnref{Rone} and \eqref{Rtwo} yield the following two identities for $j=1,2,3$:
\begin{align}
&\p_j R_1(b,\eta,\Gt) = \nonumber\\
&\quad  \frac{r_e(\p_j b(\eta)+\p_j b(\theta)) + b(\theta)\p_j b(\eta) +b(\eta)\p_j b(\theta)}{r_e^2} + \frac{2(b(\eta)-b(\theta))(\p_jb(\eta)-\p_jb(\theta)) }{r_e^2(1-\cos(\eta-\theta))} +\nonumber\\
&\quad \frac{ \p_j b(\theta)[b(\eta) + b^\prime(\eta)\sin(\theta-\eta) -b(\theta)] + (r_e+ b(\theta))[\p_j b(\eta) + (\p_j b)^\prime(\eta)\sin(\theta-\eta) -\p_j b(\theta)]}{r_e^2(1-\cos(\eta-\theta))}, \label{pjR1}\\
&\p_j R_2(b,\eta,\Gt) = \nonumber\\
&\quad \frac{r_e(\p_j b(\eta)+\p_j b(\theta)) + b(\theta)\p_j b(\eta) +b(\eta)\p_j b(\theta)}{r_e^2}
+ \frac {(b(\eta)-b(\theta))(\p_j b(\eta)-\p_j b(\theta))}{r_e^2(1-\cos(\eta-\theta))}.\label{pjR2}
\end{align}
Then, it follows from the same arguments as in \eqref{Rone}-\eqref{Rone2} that \eqref{derivative bounds} holds and hence \eqref{Lipschitz bounds} does. Moreover, since $\p_j b(\cdot)$ and $\p_j d(\cdot)$ are independent of $b_k(k=1,2,3)$ and $d_k(k=1,2,3)$, respectively,  we have from \eqref{pjR1} and \eqref{pjR2} that \eqref{2nd derivative bounds} holds and hence \eqref{jkest} also does.

From \eqref{BRoneRtwo} we have, for $j=1,2,3$,
\beq\label{p1b}
\p_j B_b(\eta,\Gt) = \frac{1}{4 \pi} \frac{\p_j R_1(b;\eta,\Gt) - \p_j R_2(b;\eta,\Gt)}{1+R_2(b;\eta,\Gt)} - \frac{1}{4 \pi} \frac{ (R_1(b;\eta,\Gt) - R_2(b;\eta,\Gt)) \p_j R_2(b;\eta,\Gt)}{[1+R_2(b;\eta,\Gt)]^2}.
\eeq
It then follows from \eqnref{Rone2}, \eqnref{Rtwo2}, \eqnref{derivative bounds}, \eqnref{Lipschitz bounds}, \eqref{2nd derivative bounds} and \eqnref{jkest} that
for $j,k =1,2,3$
\begin{align}
&\sup_{\eta,\Gt \in [0,2\pi)} |\p_k\p_j B_b (\eta,\Gt) | \le C,\label{B2nd derivatives bounds}\\
&\sup_{\eta,\Gt \in [0,2\pi)} |\p_j B_b (\eta,\Gt) - \p_j B_d (\eta,\Gt) | \le C|b-d|_\infty.\label{Bderiest}
\end{align}
Thus we have
\beq\label{Bderi}
\left\| \left( \p_j  B(b) - \p_j  B(d) \right) f_2 \right \|_2 \le C |b-d|_\infty \| f_2 \|_2
\eeq
for all $f_2 \in L^2(T)$.

The following proposition is an immediate consequence of \eqnref{Cderi}, \eqnref{Dderi} and \eqnref{Bderi}.
\begin{prop}\label{prop:deri}
There is a constant $\Ge >0$ such that if $b$ is of the form \eqnref{b0} and $\| h\|_{1,\infty} + |b|_\infty<\Ge$, then $\p_j \Acal(h,b)$ is bounded on $L^2(T)^2$ for $j=1,2,3$. Moreover, there is $C>0$ such that if $d$ is of the form \eqnref{b0} and $\| k \|_{1,\infty} + |d|_\infty <\Ge$, then
\beq\label{Aderi}
\left\| \left( \p_j  \Acal(h,b) - \p_j  \Acal(k,d) \right) f \right \|_2 \le C (\| h-k \|_{1,\infty} + |b-d|_\infty) \| f \|_2
\eeq
for all $f \in L^2(T)^2$. Here $\p_j$ denotes the partial derivative with respect to $b_j$ ($j=1,2,3$).
\end{prop}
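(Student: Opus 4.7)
The plan is to exploit the block decomposition
$$
\Acal(h,b) = \begin{pmatrix} -\Gl I + A(h) & C(h,b) \\ D(h,b) & -\Gm I + B(b) \end{pmatrix}.
$$
Since $-\Gl I$ and $-\Gm I$ are constants and $A(h)$ is independent of $b$, differentiation in $b_j$ yields
$$
\p_j \Acal(h,b) = \begin{pmatrix} 0 & \p_j C(h,b) \\ \p_j D(h,b) & \p_j B(b) \end{pmatrix},
$$
so the $L^2(T)^2$ operator norm of $\p_j \Acal(h,b)$ is controlled, up to a universal constant, by the sum of the three scalar $L^2(T)$ operator norms $\|\p_j C(h,b)\|$, $\|\p_j D(h,b)\|$, $\|\p_j B(b)\|$.

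For boundedness I would argue kernel by kernel. For $\p_j C(h,b)$ and $\p_j D(h,b)$, I would differentiate the quotient representation $C_{h,b} = \Ga(h,b;\eta,\Gt)/(2\pi \Gb(h,b;\eta,\Gt))$ and its analogue for $D$; the numerator, the denominator, and their $b_j$-derivatives are smooth trigonometric expressions with uniform sup bounds in $(\eta,\Gt)$, while $\Gb$ is bounded below by $\tfrac{1}{4}(r_e-r_i)^2$ via \eqnref{lower}. This yields uniform sup bounds on $\p_j C_{h,b}$ and $\p_j D_{h,b}$. For $\p_j B(b)$, the explicit formula \eqnref{p1b} together with \eqnref{Rone2}, \eqnref{Rtwo2}, and \eqnref{derivative bounds} gives $\sup_{\eta,\Gt} |\p_j B_b| \le C$. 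A Cauchy--Schwarz estimate on the compact interval $[0,2\pi)$ then promotes each sup bound to an $L^2(T)$ operator bound.

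The Lipschitz estimate \eqnref{Aderi} is then assembled directly: the $(1,1)$ block of $\p_j \Acal(h,b) - \p_j \Acal(k,d)$ vanishes, and the remaining three blocks are controlled by \eqnref{Cderi}, \eqnref{Dderi}, and \eqnref{Bderi}, respectively. Summing these bounds and applying the triangle inequality on the block matrix yields \eqnref{Aderi}. I do not foresee any real obstacle, since all the kernel-level work has already been carried out in the preceding estimates; the proposition is a mechanical assembly. The only point worth confirming is the uniformity in $(h,b)$ of the sup bounds on $\p_j C_{h,b}$ and $\p_j D_{h,b}$, which is immediate from the uniform lower bound \eqnref{lower} on $\Gb$ and the linear dependence of $b(\Gt) = b_1 + b_2\cos 2\Gt + b_3\sin 2\Gt$ on the parameters $(b_1,b_2,b_3)$ on a neighborhood of the origin.
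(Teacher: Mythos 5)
Your proposal is correct and follows essentially the same route as the paper: the paper states Proposition~\ref{prop:deri} as an immediate consequence of the already-established block estimates \eqnref{Cderi}, \eqnref{Dderi}, and \eqnref{Bderi}, together with the observation that $A(h)$ is independent of $b$ so the $(1,1)$ block of $\p_j\Acal$ vanishes. Your kernel-by-kernel sup bounds and Cauchy--Schwarz upgrade match the paper's underlying argument, so this is the same mechanical assembly.
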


We now show that the following identities hold for any real numbers $a$ and $b$:
\begin{align}
\p_1 \Acal(0,0) \begin{bmatrix}
a e^{i\Gt}\\
b e^{i\Gt}
\end{bmatrix} &= \frac{1}{2r_i} \begin{bmatrix}  b\\ - a \end{bmatrix} e^{i\eta} , \label{p1A} \\
\p_2 \Acal(0,0) \begin{bmatrix}
a e^{i\Gt}\\
b e^{i\Gt}
\end{bmatrix}
& = \frac{1}{4 r_i}\begin{bmatrix}
3b\Gr^2\\
- a(1+2\Gr^2)
\end{bmatrix} e^{i3\eta} + \frac{1}{4 r_i} \begin{bmatrix}
b \\
- a(1-2\Gr^2)+2b\Gr
\end{bmatrix} e^{-i\eta} , \label{p2A} \\
\p_3 \Acal(0,0) \begin{bmatrix}
a e^{i\Gt}\\
b e^{i\Gt}
\end{bmatrix} &= \frac{i}{4 r_i} \begin{bmatrix}
- 3b\Gr^2\\
 a(1+2\Gr^2)\\
\end{bmatrix} e^{i3\eta} + \frac{i}{4 r_i} \begin{bmatrix}
b \\
- a(1-2\Gr^2) + 2b\Gr
\end{bmatrix} e^{-i\eta} . \label{p3A}
\end{align}

To prove above identities, we first compute the integral kernel of each component of $\p_j  \Acal(0,0)$.
Since operator $A(h)$ is independent of $b$, it is clear that
\begin{equation*}
\p_j A_h(\eta,\Gt) = 0, \quad \quad j=1,2,3.
\end{equation*}

The following identities can be derived immediately from \eqnref{Rone} and \eqnref{pjR1}:
\begin{align*}
R_1(0;\eta,\Gt) &= 0, \\
\p_1 R_1(b;\eta,\Gt)|_{b=0} &= \frac{2}{r_e}, \\
\p_2 R_1(b;\eta,\Gt)|_{b=0} &= \frac{\cos 2\eta}{r_e} + \frac{\cos 2\eta - \cos 2\Gt  \cos(\eta-\Gt) + 2\sin 2\eta \sin(\eta-\Gt)}{r_e(1-\cos(\eta-\Gt))}, \\
\p_3 R_1(b;\eta,\Gt)|_{b=0} & = \frac{\sin 2\eta}{r_e} + \frac{\sin 2\eta - \sin 2\Gt  \cos(\eta-\Gt) - 2\cos 2\eta \sin(\eta-\Gt)}{r_e(1-\cos(\eta-\Gt))}.
\end{align*}
The following identities can be derived from \eqref{Rtwo} and \eqnref{pjR2} through straight-forward computations:
\begin{align*}
R_2(0;\eta,\Gt) &=0, \\
\p_1 R_2(b;\eta,\Gt)|_{b=0} & = \frac{2}{r_e}, \\
\p_2 R_2(b;\eta,\Gt)|_{b=0} & = \frac{\cos 2\eta + \cos 2\Gt }{r_e}, \\
\p_3 R_2(b;\eta,\Gt)|_{b=0} & = \frac{\sin 2\eta + \sin 2\Gt }{r_e}.
\end{align*}
Let $\p_j B_0(\eta,\Gt):= \p_j B_b(\eta,\Gt)|_{b=0}$ for $j=1,2,3$.
Then we obtain from \eqnref{p1b} and above identities that
\beq\label{db1}
\p_1 B_0(\eta,\Gt) = 0  ,
\eeq
and
\begin{align*}
\p_2 B_0(\eta,\Gt) & = \frac{1}{4\pi} \frac{\cos 2\eta - \cos 2\Gt  + 2\sin 2\eta \sin(\eta-\Gt)}{r_e(1-\cos(\eta-\Gt))}, \\
\p_3 B_0(\eta,\Gt) & = \frac{1}{4\pi} \frac{\sin 2\eta - \sin 2\Gt  - 2\cos 2\eta \sin(\eta-\Gt)}{r_e(1-\cos(\eta-\Gt))}.
\end{align*}
Furthermore, we have
\begin{align*}
& \cos 2\eta - \cos 2\Gt  + 2\sin 2\eta \sin(\eta-\Gt) \\
& = \cos 2\eta - \cos 2\eta \cos 2(\eta-\Gt) - \sin 2\eta \sin 2(\eta-\Gt)  + 2\sin 2\eta \sin(\eta-\Gt) \\
& = 2\left[ \cos 2\eta (1+ \cos(\eta-\Gt)) + \sin 2\eta \sin(\eta-\Gt) \right](1-\cos(\eta-\Gt)).
\end{align*}
It then follows that
\beq\label{db2}
\p_2 B_0(\eta,\Gt) = \frac{1}{2\pi r_e} \left[ \cos 2\eta (1+ \cos(\eta-\Gt)) + \sin 2\eta \sin(\eta-\Gt) \right] .
\eeq
Similarly, we have
\beq\label{db3}
\p_3 B_0(\eta,\Gt)(\eta,\Gt) =\frac{1}{2\pi r_e} \left[ \sin 2\eta (1+ \cos(\eta-\Gt)) - \cos 2\eta  \sin(\eta-\Gt) \right].
\eeq
It then follows from \eqref{db1}, \eqref{db2} and \eqref{db3} that
\begin{align*}
\p_1 B(0)[e^{i\eta}] &= 0,  \\
\p_2 B(0)[e^{i\eta}] &= \frac{1}{2r_e} e^{-i\eta}, \\
\p_3 B(0)[e^{i\eta}] &= \frac{i}{ 2r_e} e^{-i\eta}.
\end{align*}

To compute $\p_j C_{h,b}(\eta,\Gt)$ at point $(h,b) = (0,0)$, we first observe that $\Ga$ and $\Gb$ given by \eqref{alpha} and \eqref{beta} take the form
$$
\Ga(0,b;\eta,\Gt) = \frac{1}{2} (r_i^2 + r_e^2 - 2 r_i r_e \cos(\eta-\Gt)) (1+ R_3),
$$
where
$$
R_3=R_3(b;\eta,\Gt)=\frac{r_i^2 -r_e^2 - 2 r_i b(\Gt) \cos(\eta-\Gt) }{r_i^2 + r_e^2 - 2 r_i r_e \cos(\eta-\Gt)},
$$
and
$$
\Gb(0,b;\eta,\Gt) = (r_i^2 + r_e^2 - 2 r_i r_e \cos(\eta-\Gt)) (1+ R_4),
$$
where
$$
R_4=R_4(b;\eta,\Gt)=\frac{b(\Gt) (b(\Gt) + 2r_e -2r_i \cos(\eta-\Gt))}{r_i^2 + r_e^2 - 2 r_i r_e \cos(\eta-\Gt)}.
$$
It then follows from \eqnref{GaoverGb} that
\beq\label{c0}
C_{0,b}(\eta,\Gt)= \frac{1}{4\pi}\left[ 1+ \frac{R_3(b;\eta,\Gt)- R_4(b;\eta,\Gt)}{1 + R_4(b;\eta, \Gt)} \right],
\eeq
and hence
\beq\label{dc}
\p_j C_{0,b}(\eta,\Gt) = \frac{1}{4 \pi} \frac{\p_j R_3(b;\eta,\Gt) - \p_j R_4(b;\eta,\Gt)}{1+R_4(b;\eta,\Gt)} - \frac{1}{4 \pi} \frac{ (R_3(b;\eta,\Gt) - R_4(b;\eta,\Gt) \p_j R_4(b;\eta,\Gt)}{[1+R_4(b;\eta,\Gt)]^2}.
\eeq

Straightforward computations yield the following:
\beq\label{R3R4}
\begin{split}
R_3(0;\eta,\Gt) &=\frac{r_i^2 -r_e^2 }{r_i^2 + r_e^2 - 2 r_i r_e \cos(\eta-\Gt)}, \\
R_4(0;\eta,\Gt) &=0,
\end{split}
\eeq
and
\begin{align*}
\p_1 R_3 (0;\eta,\Gt) &= \frac{- 2 r_i \cos(\eta-\Gt)}{r_i^2 + r_e^2 - 2 r_i r_e \cos(\eta-\Gt)}, \\
\p_2 R_3(0;\eta,\Gt) &= \frac{ -2 r_i \cos(\eta-\Gt)\cos2\Gt}{r_i^2 + r_e^2 - 2 r_i r_e \cos(\eta-\Gt)}, \\
\p_3 R_3(0;\eta,\Gt) &= \frac{ -2 r_i \cos(\eta-\Gt)\sin 2\Gt}{r_i^2 + r_e^2 - 2 r_i r_e \cos(\eta-\Gt)},
\end{align*}
and
\begin{align*}
\p_1 R_4(0;\eta,\Gt) &= \frac{2r_e -2r_i \cos(\eta-\Gt)}{r_i^2 + r_e^2 - 2 r_i r_e \cos(\eta-\Gt)}, \\
\p_2 R_4(0;\eta,\Gt) &= \frac{(2r_e -2r_i \cos(\eta-\Gt))\cos2\Gt}{r_i^2 + r_e^2 - 2 r_i r_e \cos(\eta-\Gt)}, \\
\p_3 R_4(0;\eta,\Gt) &= \frac{(2r_e -2r_i \cos(\eta-\Gt))\sin 2\Gt}{r_i^2 + r_e^2 - 2 r_i r_e \cos(\eta-\Gt)}.
\end{align*}
Plugging these terms into \eqref{dc} we have
\begin{align}
\p_1 C_{0,0}(\eta,\Gt) &= - \frac{1}{2\pi} \frac{2r_i^2 r_e - r_i (r_i^2 + r_e^2) \cos(\eta-\Gt) }{[r_i^2 + r_e^2 - 2 r_i r_e \cos(\eta-\Gt)]^2}, \label{dc1} \\
\p_2 C_{0,0}(\eta,\Gt) &= \p_1 C_{0,0}(\eta,\Gt) \cos 2\Gt , \label{dc2} \\
\p_3 C_{0,0}(\eta,\Gt) &= \p_1 C_{0,0}(\eta,\Gt) \sin 2\Gt. \label{dc3}
\end{align}

Let $\Gr = r_i/r_e$ as before, and let
$$
P_\Gr(\eta-\Gt):= \frac{1-\Gr^2}{1 - 2\Gr \cos(\eta-\Gt) + \Gr^2 } ,
$$
which is $2\pi$ times the Poisson kernel on the unit disk. It is well known that it admits the following expansion:
\beq\label{Poisson}
P_\Gr(\eta-\Gt)= \sum_{n=-\infty}^{\infty} \Gr^{|n|} e^{in(\eta-\Gt)}.
\eeq
Then we see that
\begin{align*}
\p_1 C_{0,0}(\eta,\Gt) &= - \frac{1}{2\pi} \frac{1}{r_i} \frac{2\Gr -(1+\Gr^2) \cos(\eta-\Gt)}{[1 - 2\Gr \cos(\eta-\Gt) + \Gr^2 ]^2} \\
& =  \frac{\Ga_1}{2\pi} P_\Gr(\eta-\Gt) + \frac{\Ga_2}{2\pi} P_\Gr(\eta-\Gt)^2 ,
\end{align*}
where
$$
\Ga_1:= - \frac{1+\Gr^2}{2 r_i \Gr(1-\Gr^2)}, \quad \Ga_2:= \frac{1}{2 r_i \Gr} .
$$
Further we see from \eqnref{Poisson} that
$$
P_\Gr(\eta-\Gt)^2 = \sum_{l=-\infty}^{\infty} \Gb(l) e^{il(\eta-\Gt)},
$$
where
$$
\Gb(l):= \sum_{n=-\infty}^{\infty} \Gr^{|n|+|l-n|}.
$$
Thus we have
\beq\label{p1C}
\p_1 C(0,0)[e^{i\Gt}] = \int_0^{2\pi} \p_1 C_{0,0}(\eta,\Gt) e^{i\Gt} \, d\Gt = (\Ga_1 \Gr+ \Ga_2 \Gb(1)) e^{i\eta}.
\eeq

It follows from \eqnref{dc2} that
$$
\p_2 C(0,0)[e^{i\Gt}] = \p_1 C(0,0)[\cos 2\Gt e^{i\Gt}] = \frac{1}{2} \p_1 C(0,0)[e^{i3\Gt}] + \frac{1}{2} \p_1 C(0,0)[e^{-i\Gt}].
$$
Thus we obtain from \eqnref{p1C} that
$$
\p_2 C(0,0)[e^{i\Gt}] = \frac{1}{2} (\Ga_1 \Gr^{|3|} + \Ga_2 \Gb(3)) e^{i3\eta} + \frac{1}{2} (\Ga_1 \Gr + \Ga_2 \Gb(-1)) e^{-i\eta}.
$$
We also have from \eqnref{dc3} that
$$
\p_3 C(0,0)[e^{i\Gt}] = \frac{1}{i2} (\Ga_1 \Gr^{3} + \Ga_2 \Gb(3)) e^{i3\eta} - \frac{1}{i2} (\Ga_1 \Gr + \Ga_2 \Gb(-1)) e^{-i\eta}.
$$

Note that
$$
\Gb(1)= \frac{2\Gr}{1-\Gr^2}, \quad \Gb(3)= 2\Gr^3 + \frac{2\Gr^3}{1-\Gr^2}, \quad \Gb(-1)= \frac{2\Gr}{1-\Gr^2}.
$$
Thus we have
\begin{align*}
\p_1 C(0,0)[e^{i\theta}] &= (\Ga_1 \Gr + \Ga_2 \Gb(1)) e^{i\eta} = \frac{1}{2r_i} e^{i\eta},   \\
\p_2 C(0,0)[e^{i\theta}] &= \frac{1}{2} (\Ga_1 \Gr^3 + \Ga_2 \Gb(3)) e^{i3\eta} + \frac{1}{2} (\Ga_1 \Gr + \Ga_2 \Gb(-1)) e^{-i\eta}= \frac{3\Gr^2}{4r_i} e^{i3\eta} + \frac{1}{4r_i} e^{-i\eta},   \\
\p_3 C(0,0)[e^{i\theta}] &= \frac{1}{i2} (\Ga_1 \Gr^3 + \Ga_2 \Gb(3)) e^{i3\eta} - \frac{1}{i2} (\Ga_1 \Gr + \Ga_2 \Gb(-1)) e^{-i\eta}= -i \frac{3\Gr^2}{4r_i} e^{i3\eta} + i \frac{1}{4r_i} e^{-i\eta}.
\end{align*}

To compute $\p_j D_{h,b}(\eta,\Gt)$ at $(h,b) = (0,0)$, set
$$
\xi(h,b;\eta,\Gt):=\la x_{e,b}(\eta) - x_{i,h}(\Gt),  \nu_{e,b}(\eta) J_{e,b}(\eta)  \ra,
$$
and
$$
\zeta(h,b;\eta,\Gt):= |x_{e,b}(\eta) - x_{i,h}(\Gt)|^2.
$$
Then we have
\begin{equation*}
\begin{split}
\xi(0,b;\eta,\Gt) &= (r_e + b(\eta))^2 - r_i(r_e+b(\eta))\cos(\eta-\Gt) - b'(\eta) r_i \sin(\eta-\Gt)\\
&= \frac{1}{2} (r_i^2 + r_e^2 - 2 r_i r_e \cos(\eta-\Gt)) (1+ R_5),
\end{split}
\end{equation*}
where
$$
R_5=\frac{-r_i^2 + r_e^2 + b(\eta) (4r_e +2b(\eta) - 2 r_i  \cos(\eta-\Gt)) - 2b'(\eta) r_i \cos(\eta-\Gt) }{r_i^2 + r_e^2 - 2 r_i r_e \cos(\eta-\Gt)},
$$
and
$$
\zeta(0,b;\eta,\Gt) = (r_i^2 + r_e^2 - 2 r_i r_e \cos(\eta-\Gt)) (1+R_6),
$$
where
$$
R_6 =\frac{b(\eta) (b(\eta) + 2r_e -2r_i \cos(\eta-\Gt))}{r_i^2 + r_e^2 - 2 r_i r_e \cos(\eta-\Gt)}.
$$
Then
$$
D_{0,b}(\eta,\Gt)= \frac{\xi(0,b;\eta,\Gt)}{2\pi \zeta(0,b;\eta,\Gt)} = \frac{1}{4\pi}\left[ 1+ \frac{R_5(b;\eta,\Gt)- R_6(b;\eta,\Gt)}{1 + R_6(b;\eta, \Gt)} \right]. $$
In the same way as before, one can easily get
$$
\p_1 D_{0,0}(\eta,\Gt) = \frac{1}{2\pi} \frac{2r_i^2 r_e - r_i (r_i^2 + r_e^2) \cos(\eta-\Gt) }{[r_i^2 + r_e^2 - 2 r_i r_e \cos(\eta-\Gt)]^2}.
$$
Comparing with \eqref{dc1} one can see that
$$
\p_1 D_{0,0}(\eta,\Gt) = - \p_1 C_{0,0}(\eta,\Gt).
$$
One can also see
$$
\p_2 D_{0,0}(\eta,\Gt) = - \p_1 C_{0,0}(\eta,\Gt) \cos 2\eta +E(\eta,\Gt),
$$
where
$$
E(\eta,\Gt) = \frac{1}{2\pi}  \frac{2r_i \sin 2\eta \sin(\eta-\Gt) }{r_i^2 + r_e^2 - 2 r_i r_e \cos(\eta-\Gt)},
$$
and
$$
	\p_3 D_{0,0}(\eta,\Gt) = - \p_1 C_{0,0}(\eta,\Gt) \sin 2\eta + F(\eta,\Gt),
$$
where
$$
F(\eta,\Gt) = - \frac{1}{2\pi}  \frac{2r_i \cos 2\eta \sin(\eta-\Gt) }{r_i^2 + r_e^2 - 2 r_i r_e \cos(\eta-\Gt)}.
$$

Note that
$$
E(\eta,\Gt)=
\frac{-1}{4\pi} \frac{\Gr^2}{r_i (1-\Gr^2)} (e^{i2\eta}- e^{-i2\eta}) (e^{i2(\eta-\Gt)}- e^{-i2(\eta-\Gt)}) P_\Gr(\eta-\Gt) . $$
Thus we have
$$
\int_0^{2\pi} E(\eta,\Gt) e^{i\Gt} \, d\Gt
	= - \frac{1}{2} \frac{\Gr^2}{r_i} (e^{i3\eta}-e^{-i\eta}).
$$

Similarly, we have
$$
\int_0^{2\pi} F(\eta,\Gt) e^{i\Gt} \, d\Gt
= \frac{i}{2} \frac{\Gr^2}{r_i} (e^{i3\eta} + e^{-i\eta}).
$$
Thus we have
\begin{align*}
\p_1 D(0,0)[e^{i\theta}] &=  - \p_1 C(0,0)[e^{i\theta}] = - \frac{1}{2r_i} e^{i\eta},  \\
\p_2 D(0,0)[e^{i\theta}] &= - \p_1 C(0,0)[e^{i\theta}] \cos 2\eta - \frac{1}{2} \frac{\Gr^2}{r_i} (e^{i3\eta}-e^{-i\eta}) = - \frac{1+ 2\Gr^2}{4r_i} e^{i3\eta} - \frac{1 - 2\Gr^2}{4r_i} e^{-i\eta},   \\
\p_3 D(0,0)[e^{i\theta}] &= - \p_1 C(0,0)[e^{i\theta}] \sin 2\eta + \frac{i}{2} \frac{\Gr^2}{r_i} (e^{i3\eta} + e^{-i\eta}) =  i \frac{1+ 2\Gr^2}{4r_i} e^{i3\eta} - i \frac{1 - 2\Gr^2}{4r_i} e^{-i\eta}.
\end{align*}

We then have
\begin{align}
\p_1 \Acal(0,0) \begin{bmatrix}
a e^{i\Gt}\\
b e^{i\Gt}
\end{bmatrix} &= \begin{bmatrix}
\p_1 C(0,0)[b e^{i\Gt}]\\
\p_1 D(0,0)[a e^{i\Gt}] + \p_1 B(0)[b e^{i\Gt}]
\end{bmatrix} =  \frac{1}{2r_i} \begin{bmatrix}
 b\\
- a
\end{bmatrix} e^{i\eta} ,
\end{align}
which proves \eqnref{p1A}.  \eqnref{p2A} and \eqnref{p3A} can be proved similarly.

\section{Proof of Theorem \ref{main_thm}} \label{sec:proof}

Here we prove Theorem \ref{main_thm} by showing that $m_{ll'}$ satisfies the hypothesis of Theorem \ref{thm:ift}: continuity in $(h,b)$, continuous differentiability in $b$, and \eqnref{bJacob}.

By definition \eqref{PT0} the functions $m_{l,l'}(h,b)$, $l, l'=1,2$, are given by
$$
m_{l,l'}(h,b)=\int_{\p D}x_{l'}\Gvf_1^{(l)} \, ds+\int_{\p \GO}x_{l'} \Gvf_2^{(l)} \, ds,
$$
where $\Gvf^{(l)}= (\Gvf_1^{(l)},\Gvf_2^{(l)})\in L^2_0(\p D)\times  L^2_0(\p \GO)$ is the unique solution to \eqnref{pp}.
Using changes of variables \eqnref{icoordi} and \eqnref{ecoordi}, we see that
\beq\label{m11pre}
m_{11}(h,b) =\int_0^{2\pi} (r_i+ h(\Gt))\cos\Gt \, f_{h,b,1}^{(1)}(\Gt) \, d\Gt + \int_0^{2\pi} (r_e+ b(\Gt))\cos\Gt \, f_{h,b,2}^{(1)}(\Gt) \, d\Gt ,
\eeq
where
\beq
f_{h,b,1}^{(l)}(\Gt):= \Gvf^{(l)}_1(x_{i,h}(\Gt)) J_{i,h}(\Gt), \quad f_{h,b,2}^{(l)}(\Gt):= \Gvf_2^{(l)}(x_{e,b}(\Gt)) J_{e,b}(\Gt).
\eeq

Let
\beq
p(h,b):= (r_i+ h(\Gt), r_e+ b(\Gt))^\top.
\eeq
Then, $m_{11}$ given in \eqnref{m11pre} can be rewritten as
\beq\label{m11def}
m_{11}(h,b) = \left\la \cos\Gt p(h,b), f_{h,b}^{(1)} \right\ra,
\eeq
where $f^{(l)}_{h,b}= (f^{(l)}_{h,b,1}, f^{(l)}_{h,b,2})^\top$. Here and afterwards, $\la \ , \ \ra$ denotes the inner product on $L^2(T)^2$. Likewise, we have
\beq\label{m22def}
m_{22}(h,b) = \left\la \sin\Gt p(h,b), f_{h,b}^{(2)} \right\ra,
\eeq
and
\beq\label{m12def}
m_{12}(h,b) = \left\la \sin \Gt p(h,b), f_{h,b}^{(1)} \right\ra.
\eeq

Note that $f^{(l)}_{h,b}$ is the solution of
\beq\label{inteqn}
\Acal (h,b) [f^{(l)}_{h,b}] = g^{(l)}_{h,b},
\eeq
where $g^{(l)}_{h,b}= (g^{(l)}_{h,b,1}, g^{(l)}_{h,b,2})^\top$ is given by
$$
g_{h,b,1}^{(l)}(\Gt):= - \nu^{(l)}_{\p D_h}(x_{i,h}(\Gt)) J_{i,h}(\Gt), \quad g_{h,b,2}^{(l)}(\Gt):= - \nu_{\p\GO_b}^{(l)}(x_{e,b}(\Gt)) J_{e,b}(\Gt).
$$
We see from \eqnref{inormal} and \eqnref{enormal} that $g^{(l)}_{h,b}$ is given by
\beq\label{gone}
g^{(1)}_{h,b} = - \left((r_i + h(\eta))\cos\eta + h'(\eta) \sin\eta , (r_e+b(\eta))\cos\eta + b'(\eta) \sin\eta \right)^\top
\eeq
and
\beq\label{gtwo}
g^{(2)}_{h,b} = - \left((r_i + h(\eta)) \sin\eta - h'(\eta)\cos\eta, (r_e+b(\eta)) \sin\eta - b'(\eta)\cos\eta \right)^\top .
\eeq

\medskip
\noindent{\bf Continuity in $(h,b)$}. We only prove continuity of $m_{11}$ since the others can be handled in the same way.

Suppose $k \in W^{2,\infty}(T)$ and $d \in W_3$. Then we have
$$
\Acal (k,d) [f^{(1)}_{k,d}] = g^{(1)}_{k,d}.
$$
So, we have
$$
\Acal (k,d) [f^{(1)}_{k,d} - f^{(1)}_{h,b}] = - (\Acal (k,d) - \Acal (h,b))[f^{(1)}_{h,b}] + ( g^{(1)}_{k,d} - g^{(1)}_{h,b}) .
$$
We then infer using Corollary \ref{cor} that
$$
\left\| f^{(1)}_{k,d} - f^{(1)}_{h,b} \right\|_2 \le C \left( \left\| (\Acal (k,d) - \Acal (h,b))[f^{(1)}_{h,b}] \right\|_2 + \left\| g^{(1)}_{k,d} - g^{(1)}_{h,b} \right\|_2 \right)
$$
for some constant $C$ independently of $(k,d)$ as long as $\| k \|_{2,\infty}$ and $| d|_\infty$ are sufficiently small. We then infer from \eqnref{contitwo} that
$$
\left\| (\Acal (k,d) - \Acal (h,b))[f^{(1)}_{h,b}] \right\|_2 \to 0
$$
as $\| k-h \|_{2,\infty} + |d-b|_\infty \to 0$. It is obvious from \eqnref{gone} that $\| g^{(1)}_{k,d} - g^{(1)}_{h,b} \|_2 \to 0$. Thus we have $\| f^{(1)}_{k,d} - f^{(1)}_{h,b} \|_2 \to 0$. We then conclude using \eqnref{m11def} that $m_{11}(k,d)-m_{11}(h,b) \to 0$ as $\| k-h \|_{2,\infty} + |d-b|_\infty \to 0$.

\medskip
\noindent{\bf Continuous differentiability in $b$}. By differentiating \eqnref{inteqn} with respect to $b_j$-variable, we have
$$
\Acal (h,b) [\p_j f^{(1)}_{h,b}] = \p_j g^{(1)}_{h,b}- \p_j \Acal (h,b) [f^{(1)}_{h,b}],
$$
namely,
\beq\label{derieqn}
\p_j f^{(1)}_{h,b} = \Acal (h,b)^{-1} \left[ \p_j g^{(1)}_{h,b} - \p_j \Acal (h,b) [f^{(1)}_{h,b}] \right].
\eeq
We mention that this argument is formal since we take the derivative of $f^{(1)}_{h,b}$ without proving its existence. However, this formal argument can be justified easily.

It is clear from \eqnref{gone} that $\p_j g^{(1)}_{h,b}$ is continuous in $(h,b)$. Then Corollary \ref{cor}, Proposition \ref{prop:deri} and continuity of $f^{(1)}_{h,b}$ implies that $\p_j f^{(1)}_{h,b}$ is continuous in $(h,b)$. We then obtain from \eqnref{m11def} that
$$
\p_j m_{11}(h,b) = \left\la \cos\Gt \p_j p(h,b), f_{h,b}^{(1)} \right\ra + \left\la \cos\Gt p(h,b), \p_j f_{h,b}^{(1)} \right\ra,
$$
which shows that $\p_j m_1(h,b)$ is continuous in $(h,b)$.

\medskip
\noindent{\bf Proof of \eqnref{bJacob}}. For simplicity of expression we put
$$
\psi_1(\Gt):= \cos \Gt, \quad \psi_2(\Gt):= \sin \Gt.
$$
Then derivatives of $m_{ll'}$ takes the following form
\begin{align}
\p_j m_{ll'}(0,0) &= \left\la \psi_{l'} \p_j p(0,0), f_{0,0}^{(l)} \right\ra + \left\la \psi_{l'} p(0,0), \p_j f_{0,0}^{(l)} \right\ra. \label{mderi}
\end{align}

Observe that $A_0$ and $B_0$ are constants, and hence operators $A(0)$ and $B(0)$ are trivial as operators on $L^2_0(T)$.
Thus we have
$$
\Acal(0,0) =
\begin{bmatrix}
\mu \Gr^2 I  & C(0,0) \\
D(0,0) & -\Gm I
\end{bmatrix},
$$
as an operator on $L^2_0(T)^2$. Here we used the fact $\Gl=-\mu \Gr^2$ which is a consequence of \eqref{neutral} and \eqref{lambdamu}. We see from \eqref{c0} and \eqref{R3R4} that
$$
C_{0,0}(\eta, \Gt) = \frac{1}{4 \pi} \left(1 - P_\Gr(\eta-\Gt) \right),
$$
where $P_\Gr(\eta-\Gt)$ is the Poisson kernel given in \eqnref{Poisson}. Thus we have
\beq\label{C00}
C(0,0)[e^{im\Gt}]= 	-\frac{\Gr^{|m|}}{2} e^{im\eta}, \quad m\neq 0.
\eeq
Likewise we have
\beq\label{D00}
D(0,0)[e^{im\Gt}]= \frac{\Gr^{|m|}}{2} e^{im\eta}, \quad m\neq 0.
\eeq
It then follows that $D(0,0) C(0,0) = C(0,0)D(0,0)$, and
$$
(\Gm^2 \Gr^2 I + C(0,0)D(0,0)) [e^{im\Gt}] = (\Gm^2 \Gr^2 - \frac{1}{4} \Gr^{2|m|}) e^{im\eta} .
$$
Since $|\Gm| > 1/2$ as one can see from \eqref{lambdamu}, we see that $\Gm^2 \Gr^2 I + C(0,0)D(0,0)$ is invertible on $L^2(T)$ and $\Acal(0,0)^{-1}$ is given by
\begin{equation*}
\Acal(0,0)^{-1} = -
\begin{bmatrix}
	(\Gm^2 \Gr^2 I + C(0,0)D(0,0))^{-1} &  0 \\
	0  & (\Gm^2 \Gr^2 I + C(0,0)D(0,0))^{-1}
\end{bmatrix}
\begin{bmatrix}
-\mu  I  & -C(0,0) \\
-D(0,0) & \Gm\Gr^2 I
\end{bmatrix}.
\end{equation*}
Let $(\Acal(0,0)^{-1})^*$ be the adjoint of $\Acal(0,0)^{-1}$. Then, in particular, we have
\beq\label{Ainv}
\left(\Acal(0,0)^{-1}\right)^* \begin{bmatrix} ae^{i\Gt} \\ be^{i\Gt} \end{bmatrix} = \Gg_1  e^{i\eta} \begin{bmatrix}
	-\mu a - \frac{\Gr}{2} b  \\
	\frac{\Gr}{2} a + \mu \Gr^2 b
\end{bmatrix},
\eeq
for any real constants $a$ and $b$, where
$$
\Gg_1 = \dfrac{1}{\Gr^2 (1/2+\mu)(1/2-\mu)}.
$$

We now compute the first term on the right-hand side of \eqref{mderi}. Since
$$
g^{(l)}_{0,0} = - \psi_l (r_i , r_e)^\top, \quad l=1,2,
$$
we have
\beq\label{f0}
f^{(l)}_{0,0} =  \psi_l V_1, \quad l=1,2,
\eeq
where the constant vector $V_1$ is defined by
$$
V_1 := \Gg_2 r_i\begin{bmatrix} - 1  \\ \Gr \end{bmatrix} \quad\mbox{with }
\Gg_2 = \dfrac{1}{\Gr^2 (1/2+\mu)} = \Gg_1(1/2-\mu).
$$
Since
$$\p_j p(0,0) \cdot V_1=\begin{cases}
\Gg_2 r_i \Gr& \mbox{if } j=1, \\
\Gg_2 r_i \Gr \cos 2\Gt& \mbox{if } j=2, \\
\Gg_2 r_i \Gr \sin 2\Gt& \mbox{if } j=3,
\end{cases}$$
we see that
\beq\label{1term}
\left\la \psi_{l'} \p_j p(0,0), f_{0,0}^{(l)} \right\ra =
\begin{cases}
	\Gg_2 r_i \Gr \pi & \mbox{if } (l,l',j)=(1,1,1), (2,2,1), \\
	\frac{1}{2}\Gg_2 r_i \Gr \pi & \mbox{if } (l,l',j)=(1,1,2), (1,2,3),\\
	-\frac{1}{2}\Gg_2 r_i \Gr \pi & \mbox{if } (l,l',j)=(2,2,2),  \\
	0 \quad & \mbox{otherwise}.
\end{cases}
\eeq

To compute the second term on the right-hand side of \eqref{mderi}, namely, $\la \psi_{l'} p(0,0), \p_j f_{0,0}^{(l)} \ra$, we first observe from \eqnref{derieqn} that
$$
\p_j f^{(l)}_{0,0} = \Acal (0,0)^{-1} \left[ \p_j g^{(l)}_{0,0} - \p_j \Acal (0,0) [f^{(l)}_{0,0}] \right].
$$
Thus we have
$$
\left\la \psi_{l'} p(0,0), \p_j f_{0,0}^{(l)} \right\ra = \left\la  (\Acal (0,0)^{-1})^*[\psi_{l'} p(0,0)], \p_j g^{(l)}_{0,0} - \p_j \Acal (0,0) [f^{(l)}_{0,0}] \right\ra.
$$
In view of \eqnref{Ainv}, we have
\beq
(\Acal (0,0)^{-1})^*[\psi_{l'} p(0,0)] = \psi_{l'} \Gg_1 r_i (\frac{1}{2} + \mu) \begin{bmatrix} -1 \\ \Gr \end{bmatrix} =: \psi_{l'} V_2,
\eeq
and hence
\beq\label{2term}
\left\la \psi_{l'} p(0,0), \p_j f_{0,0}^{(l)} \right\ra = \left\la \psi_{l'} V_2, \p_j g^{(l)}_{0,0} - \p_j \Acal (0,0) [f^{(l)}_{0,0}] \right\ra .
\eeq

It is convenient to use the following notation:
$$
V_3:= (0,1)^\top,
$$
and
$$
\psi_3(\Gt):=\cos 3\Gt, \quad \psi_4(\Gt):= \sin 3\Gt.
$$
Then, one can see from \eqnref{gone} and \eqnref{gtwo} that
\begin{equation*}
\begin{split}
	\p_1 g^{(1)}_{0,0} &= - \psi_1 V_3, \\
	\p_2 g^{(1)}_{0,0} &=  (1/2\psi_1 - 3/2 \psi_3) V_3 , \\
	\p_3 g^{(1)}_{0,0} &= (1/2\psi_2 - 3/2 \psi_4) V_3 ,
\end{split}
 \end{equation*}
and
\begin{equation*}
\begin{split}
	\p_1 g^{(2)}_{0,0} &= - \psi_2 V_3, \\
	\p_2 g^{(2)}_{0,0} &=  -(1/2\psi_2 + 3/2 \psi_4) V_3 , \\
	\p_3 g^{(2)}_{0,0} &= - (1/2\psi_1 + 3/2 \psi_3) V_3 .
\end{split}
 \end{equation*}
Since $V_2 \cdot V_3= \Gg_1  r_i \Gr (1/2+\mu)$, it then follows that
\beq\label{g}
\left\la \psi_{l'} V_2,  \p_j g^{(l)}_{0,0}  \right\ra =
\begin{cases}
	- \pi \Gg_1  r_i \Gr (1/2+\mu) & \mbox{if } (l,l',j)= (1,1,1), (2,2,1), \\
	\pi \Gg_1  r_i \Gr (1/2+\mu)/2 & \mbox{if } (l,l',j)= (1,1,2), (1,2,3),\\
	- \pi \Gg_1  r_i \Gr (1/2+\mu)/2 & \mbox{if } (l,l',j)=   (2,2,2),\\
	0 \quad & \mbox{otherwise}.
\end{cases}
\eeq

Let
$$
V_4: = \frac{\Gg_2}{2} (\Gr,1)^\top, \quad V_5:= \frac{\Gg_2}{4} (3 \Gr^3, 1+ 2\Gr^2)^\top .
$$
Due to \eqref{p1A}, \eqref{p2A}, \eqref{p3A} and \eqref{f0}, we have
\begin{equation*}
\begin{split}
\p_1 \Acal (0,0) [f^{(1)}_{0,0}] &= \psi_1 V_4,  \\
\p_1 \Acal (0,0) [f^{(2)}_{0,0}] &= \psi_2 V_4, \\
\p_2 \Acal (0,0) [f^{(1)}_{0,0}] &= \frac{1}{2} \psi_1 V_4 + \psi_3 V_5, \\
\p_2 \Acal (0,0) [f^{(2)}_{0,0}] &= -\frac{1}{2} \psi_2 V_4 + \psi_4 V_5, \\
\p_3 \Acal (0,0) [f^{(1)}_{0,0}] &= \frac{1}{2} \psi_2 V_4 + \psi_4 V_5, \\
\p_3 \Acal (0,0) [f^{(2)}_{0,0}] &= \frac{1}{2} \psi_2 V_4 - \psi_4 V_5.
\end{split}
\end{equation*}
Note that $V_2 \cdot V_4= 0$ and $V_2 \cdot V_5=\frac{1}{4}\Gg_1\Gg_2 r_i (\frac{1}{2} + \mu) \Gr(1-\Gr^2)$. Thus it follows that
\beq\label{f}
\left\la \psi_{l'} V_2, \p_j \Acal (0,0) [f^{(l)}_{0,0}] \right\ra =0.
\eeq

We then have from \eqref{mderi}, \eqref{1term}, \eqref{2term}, \eqref{g} and \eqref{f} that
\begin{equation*}
\p_j m_{ll'}(0,0) =
\begin{cases}
-2 \mu \tau \pi & \mbox{if } (l,l',j)= (1,1,1), (2,2,1),\\
\frac{\tau \pi}{2} & \mbox{if } (l,l',j)=  (1,1,2), (1,2,3),\\
- \frac{\tau \pi}{2} & \mbox{if } (l,l',j)=  (2,2,2),\\
0 \quad & \mbox{otherwise}.
\end{cases}
\end{equation*}
where $\tau = \dfrac{ r_e}{(1/2-\mu )(1/2+\mu )}$.

We finally obtain
\begin{equation*}
\frac{\p (m_{11}, m_{22}, m_{12})}{\p (b_1,b_2, b_3)} (0,0) = \left(\frac{\tau \pi}{2}\right)^3 \det
\begin{bmatrix}
-4\mu & 1 & 0 \\
-4\mu & -1 & 0 \\
0 & 0 & 1
\end{bmatrix} \neq 0,
\end{equation*}
which yields \eqnref{bJacob}.

\begin{rem}
By switching roles of $h$ and $b$, let $M(b,h)$ be the polarization tensor associated with domain $(\Omega_h,D_b)$. Similar computations yield
\begin{equation*}
\frac{\p (m_{11}, m_{22}, m_{12})}{\p (b_1,b_2, b_3)} (0,0) = -\left(\frac{r_e}{r_i}\frac{\tau \pi}{2}\right)^3 \det
\begin{bmatrix}
-4\mu & 1 & 0 \\
-4\mu & -1 & 0 \\
0 & 0 & 1
\end{bmatrix} \neq 0.
\end{equation*}
Then we have Theorem \ref{main_thm2}.
\end{rem}


\end{document}